\newtheorem{theorem}{Theorem}[section]
\newtheorem{cor}{Corollary}[theorem]
\newtheorem{lemma}{Lemma}[section]
\newtheorem{proposition}{Proposition}[lemma]
\newcommand*{\rom}[1]{\expandafter\@slowromancap\romannumeral #1@}
\title{Topological Equivalence and Curvature Convergence: B\'ezier Surface Approximation}
\author{J.Li\thanks{Department of Mathematics,
       University of Connecticut, 196 Auditorium Road, Unit 3009, Storrs, CT 06269.  Telephone: 860-486-3916.
       Email:  {\tt ji.li@uconn.edu.}}}
\date{\today}
\begin{document}
\maketitle

\begin{abstract}
A set of control points can determine a B\'ezier surface and a triangulated surface simultaneously. We prove that the triangulated surface becomes homeomorphic and ambient isotopic to the B\'ezier surface via subdivision. We also show that the total Gaussian curvature of the triangulated surface converges to the total Gaussian curvature of the B\'ezier surface. 
\end{abstract}

{\it Keywords: }
B\'ezier surface, homeomorphism, ambient isotopy; convergence, total Gaussian curvature.

\vspace{1ex}
{\it 2000 MSC} 57Q37,  57M50,  49M25,  68R10

\section{Introduction}

A connected compact surface is classified, up to homeomorphism, by the number of boundary components, the orientability, and Euler characteristic. The Gauss-Bonnet theorem provides a remarkable relation between Euler characteristic (a topological invariant) of a compact surface and the integral of its curvature (an intrinsic invariant). The integral of curvature is said total Gaussian curvature. The concept of total Gaussian curvature has been extended to polyhedral surfaces, and the discrete Gauss-Bonnet theorem has been proved \cite{cheeger1984curvature, reshetnyak1993geometry}. The study of homeomorphic equivalence between smooth and polyhedral surfaces is related to the study of convergence of the total Gaussian curvature during approximation. 

Isotopy is a continuous path of homeomorphisms connecting two given homeomorphisms. In geometric modeling,  isotopy is particularly useful for time-varying geometric models, while homeomorphism is used for static images. 

A two-dimensional B\'ezier surface is a parametric surface defined by an indexed set of control points in space. Because of the simplicity of construction and richness of properties, B\'ezier patch meshes are superior to meshes of triangles as a computational representation of smooth surfaces \cite{G.Farin1990}. In computer graphics, smooth structures are approximated by piecewise linear (p.l.) structures. So B\'ezier surfaces are further rendered using p.l. surfaces by computers. Consequently smooth surfaces are approximated by p.l. surfaces with B\'ezier surfaces as intermediaries. 

The set of control points can be used to determine a triangulated surface, designated as a {\em control surface}. The control surface can then be used as the initial p.l. approximation of the B\'ezier surface. The de Casteljau algorithm is a subdivision process that recursively produces new control surfaces as finer p.l. approximations. 

While previous work regarding surface reconstruction has been done on homeomorphism \cite{amenta2000simple}, there exist some recent papers \cite{Amenta2003, Chazal2005, sakkalis2003ambient} dealing with isotopy. To the best of our knowledge, the equivalence relations (defined by homeomorphism and ambient isotopy respectively) between a B\'ezier surface and the associated triangulated surface presented here, was not previously established. 

\begin{figure}[h!]
\centering
        \subfigure[Non-self-intersecting B\'ezier surface]
   {   \includegraphics[height=5cm]{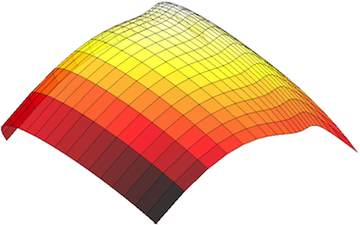}  \label{fig:ob} }
           \subfigure[Self-intersecting control surface]
   {   \includegraphics[height=5cm]{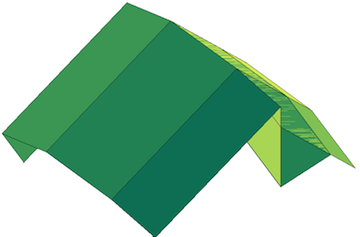}  \label{fig:ppc} }
\caption{Not Homeomorphic}\label{fig:nh}
 \end{figure}

\begin{figure}[h!]
\centering
        \subfigure[Unknotted Bezier torus]
   {   \includegraphics[height=6cm]{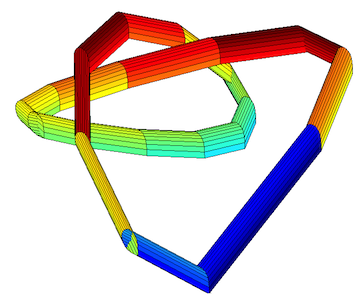}  \label{fig:surf} }
           \subfigure[Knotted Control torus]
   {   \includegraphics[height=6cm]{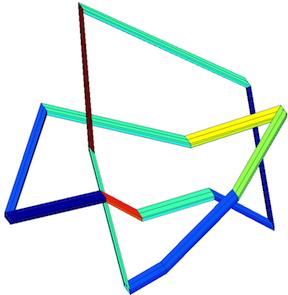}  \label{fig:con} }
\caption{Not Ambient Isotopic}\label{fig:surf-con}
 \end{figure}

Visual examples are given by Figure~\ref{fig:nh} and~\ref{fig:surf-con}. A non-self-intersecting smooth surface in Figure~\ref{fig:ob} is initially defined by a self-intersecting p.l. control surface in Figure~\ref{fig:ppc}. A smooth surface of an unknotted torus in Figure~\ref{fig:surf} is initially defined by a knotted p.l. control surface in Figure~\ref{fig:con}. Our result guarantees that subdivision will produce a non-self-intersecting control surface for the surface in Figure~\ref{fig:ob}, and an unknotted control surface for the surface in Figure~\ref{fig:surf}. The analogue for B\'ezier curves has been established~\cite{bez-iso}.

With the extension of curvature measures to flat spaces, convergence regarding curvature measures from p.l. surfaces to smooth surfaces, and the convergence in the opposite direction, have been studied \cite{brehm1982smooth, cheeger1984curvature, fu1993, meek2000surface}. It finds applications especially for surface reconstruction. In particular, Brehm and Kuhnel \cite{brehm1982smooth} showed that every polyhedral surface can be approximated by smooth surfaces such that the total Gaussian curvature converges. The similar result holds in the opposite direction, i.e.  approximating a smooth surface by polyhedral surfaces \cite{cheeger1984curvature}. As smooth surfaces are often represented by B\'ezier patch meshes, we consider this opposite direction using B\'ezier surfaces and show that the total Gaussian curvature of the triangulated surface will converge to the total Gaussian curvature of the B\'ezier surface.

If two end points of a B\'ezier curve are equal to each other, then the curve is closed. For a B\'ezier surface determined by the control points $\{p_{ij}\}_{i,j=0}^{i=n,j=m}$, if both $p_{i0} = p_{im}$ and $p_{0j} = p_{nj}$ for all $i=0,\ldots,n$ and $j=0,\ldots,m$, then the surface is closed. Otherwise if $p_{i0}  \neq p_{im}$ and $p_{0j} \neq p_{nj}$ for all $i=0,\ldots,n$ and $j=0,\ldots,m$, then the surface is said open. Throughout the paper, we consider B\'ezier surfaces either open or closed\footnote{Without the restriction on endpoints, the topological equivalence may not be obtained.}, with some regularity assumptions which will be specified later. Also, B\'ezier surfaces here are compact and non-self-intersecting\footnote{Throughout the paper, by a non-self-intersecting surface, we mean that the map is injective, except the end points when the surface is closed.}. Now we state our main theorems.

\begin{theorem}
The control surface and a B\'ezier surface will eventually be homeomorphic and ambient isotopic via subdivision.
\end{theorem}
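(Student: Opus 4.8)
The plan is to realize the eventual equivalence through the nearest-point projection onto the B\'ezier surface $B$, exploiting the fact that de Casteljau subdivision controls both the positions and the tangent planes of the control surface. First I would record the geometric room around $B$: since $B$ is compact, regular, and non-self-intersecting, it has a positive normal injectivity radius $\rho > 0$, so that on the tubular neighborhood $N_\rho(B) = \{ x : \mathrm{dist}(x, B) < \rho \}$ the nearest-point projection $\pi : N_\rho(B) \to B$ is a well-defined retraction whose fibers are the open normal segments of length $2\rho$. In the closed case this is the standard tube of an embedded closed surface; in the open case one takes the tube of the surface-with-boundary together with an outward collar, so both hypotheses in the excerpt are covered.

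Next I would invoke the approximation estimates for de Casteljau subdivision. Writing $C_k$ for the control surface after $k$ rounds, the classical second-difference bounds give two facts: (i) the Hausdorff distance $d_H(C_k, B) \to 0$, so $C_k \subset N_\rho(B)$ for all large $k$; and (ii) the unit facet normals of $C_k$ converge uniformly to the unit normal of $B$ at the projected points. Fact (ii) is the $C^1$-type conclusion: because each subdivision refines the net and the net converges quadratically in the mesh size to $B$, the linear facets become arbitrarily close to tangent to $B$.

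With these in hand I would produce the homeomorphism by showing that $\pi|_{C_k} : C_k \to B$ is a bijection for large $k$. It is continuous and surjective (a linking argument on each normal fiber shows that every fiber meets $C_k$, since $C_k$ separates $B$ from the complement of the tube). For injectivity, fact (ii) forces each facet to map without folding, giving local injectivity, while the tube structure together with the $C^0$ closeness in (i) rules out distant pieces of $C_k$ landing on a common fiber. A continuous bijection from the compact space $C_k$ to the Hausdorff space $B$ is then a homeomorphism. For the ambient isotopy I would slide $C_k$ to $B$ along the normal fibers: move each $x \in N_\rho(B)$ a fraction of the way toward $\pi(x)$ and damp the motion to the identity near $\partial N_\rho(B)$ with a smooth bump function. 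The time-$t$ map is a fiber-preserving homeomorphism of $\mathbb{R}^3$, equal to the identity at $t = 0$ and carrying $C_k$ onto $B$ at $t = 1$, hence an ambient isotopy; alternatively the isotopy extension theorem supplies it once $\pi|_{C_k}$ is an embedding isotopic to the inclusion of $B$.

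I expect the main obstacle to be the \emph{global} injectivity of $\pi|_{C_k}$. Local injectivity follows cheaply from the normal-convergence fact (ii), but excluding the possibility that two far-apart regions of the control surface project onto the same point of $B$ requires combining the quantitative $C^0$ bound with the fiber geometry of the tube, and making the threshold on $k$ uniform over the whole surface. The closed case, where the parametrization is injective only away from the glued seam, will need a little extra care in defining the tube across that seam.
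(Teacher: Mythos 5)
Your route is genuinely different from the paper's. The paper never uses the nearest-point projection: it first proves that $\mathbf{l}$ becomes injective (Lemmas~\ref{lem:de-contl}--\ref{lem:nsint}, via bounding-cone conditions on normals and isoparametric tangents, a seam analysis for the closed case, and a minimal-separation-distance argument combined with Hausdorff convergence), and then gets the homeomorphism abstractly by observing that both $\mathbf{l}$ and $\mathbf{b}$ are injective images of the same model space (the square, or the torus fundamental polygon), so $\mathbf{b}\circ\mathbf{l}^{-1}$ does the job. For ambient isotopy it outsources the hard part: in the open case it smooths $\mathbf{l}$ (Brehm--K\"uhnel) and invokes Hirsch's uniqueness of disk embeddings; in the closed case it cites Chazal--Cohen-Steiner's tubular-neighborhood criterion. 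Your plan is closer to how that cited criterion is itself proved, and if executed it would be more self-contained and would yield the explicit homeomorphism $\pi|_{C_k}$ and an explicit supported isotopy. What it costs is that the entire difficulty is concentrated in the surjectivity and global injectivity of $\pi|_{C_k}$, which you correctly identify as the crux but do not resolve; note that the ``linking argument'' for surjectivity presupposes that $C_k$ is an embedded surface in the tube, which is itself part of what must be proved (it is exactly the content of the paper's Lemmas~\ref{lem:de-contl}--\ref{lem:nsint}), and the exclusion of far-apart sheets on a common fiber is precisely the paper's Proposition~\ref{prop:aste}. So as written this is a plan with the central lemmas still open, not a proof.

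One step would fail as stated: in the open case $\pi|_{C_k}:C_k\to B$ is not a bijection, because $\partial C_k$ is the boundary control polygon, not $\partial B$; points of $C_k$ near the boundary project into the interior of $B$ or pile up on $\partial B$, and fibers over boundary points of $B$ may miss $C_k$ entirely, so both injectivity and surjectivity break down in a collar of the boundary no matter how many subdivisions are performed. An outward collar on the tube does not repair this, since the map you need is onto $B$ itself. You would have to either compose with a controlled homeomorphism matching the two boundaries, or abandon the projection in the open case (as the paper does, replacing it with the disk-embedding-uniqueness argument). The closed-case seam issue you flag is real but minor, since the glued surface is still a compact embedded $C^0$ surface with well-defined facet normals away from a measure-zero set. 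Finally, your fiber-sliding isotopy as literally described (``move each $x$ a fraction of the way toward $\pi(x)$'') moves $B$ as well and does not carry $C_k$ onto $B$; the correct construction rescales each normal fiber so as to send the unique point of $C_k\cap\pi^{-1}(b)$ to $b$ while fixing the fiber endpoints, or else one falls back on the isotopy extension theorem as you suggest.
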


\begin{theorem}
For an open compact surface $\mathbf{b}$, the control surface $\mathbf{l}$ satisfies the following convergence, via subdivision:
$$\sum_{p \in \mathring{\mathbf{l}}} K(p) \rightarrow \int_{\mathring{\mathbf{b}}} K dA+ \int_{\partial\mathbf{b}} \kappa_g -\kappa ds$$
where $\kappa_g$ and $\kappa$ are the geodesic curvature and curvature respectively, at a smooth point of the boundary $\partial \mathbf{b}$. 
\end{theorem}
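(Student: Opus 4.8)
The plan is to route the entire argument through the two Gauss--Bonnet theorems, so that the ``bulk'' interior sum is handled by an exact identity rather than by a delicate vertex-by-vertex Riemann-sum estimate (such a direct estimate would be awkward, since normals of inscribed triangulations need not converge in general). For a boundary vertex $p$ of the control surface write $\delta(p)=\pi-\sum_{f\ni p}\theta_{f,p}$ for its angle defect, where $\theta_{f,p}$ is the angle at $p$ of the triangular face $f$. The combinatorial discrete Gauss--Bonnet identity holds exactly at every subdivision level and gives
\[
\sum_{p\in\mathring{\mathbf l}}K(p)=2\pi\,\chi(\mathbf l)-\sum_{p\in\partial\mathbf l}\delta(p),
\]
while the smooth Gauss--Bonnet theorem gives $\int_{\mathring{\mathbf b}}K\,dA=2\pi\,\chi(\mathbf b)-\int_{\partial\mathbf b}\kappa_g\,ds$. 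Since the first theorem makes $\mathbf l$ and $\mathbf b$ homeomorphic after finitely many subdivisions, we have $\chi(\mathbf l)=\chi(\mathbf b)$, and the desired right-hand side collapses to $2\pi\chi-\int_{\partial\mathbf b}\kappa\,ds$. Hence the asserted convergence is algebraically equivalent to the single boundary statement
\[
\sum_{p\in\partial\mathbf l}\delta(p)\longrightarrow\int_{\partial\mathbf b}\kappa\,ds,
\]
which is the entire content that remains to prove, and it already explains the otherwise surprising $-\kappa$ correction term: the boundary defect sees the full space curvature, not the geodesic curvature.

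Next I would compare the \emph{intrinsic} defect $\delta(p)$ with the \emph{extrinsic} turning angle $\alpha(p)=\pi-\angle(p^-,p,p^+)$ of the boundary polygon, where $p^-,p,p^+$ are consecutive boundary vertices. The key mechanism is flattening: under de Casteljau subdivision the control net converges to the B\'ezier surface in $C^1$, so near $p$ every face lies within angle $O(h)$ of the tangent plane $T_p$ (here $h$ is the mesh size), because a neighbour $q_k$ at distance $O(h)$ deviates from $T_p$ by only $O(h^2)$ on a $C^2$ surface. As the faces fan monotonically from $pp^-$ to $pp^+$, their angles telescope to the in-plane swept angle, so $\sum_{f\ni p}\theta_{f,p}\to\angle(p^-,p,p^+)$ and therefore $\delta(p)\to\alpha(p)$. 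The error per vertex is second order, $|\delta(p)-\alpha(p)|=O(h^2)$, coming from the $O(h)$ tilts entering quadratically; since subdivision keeps the valence bounded there are only $O(1/h)$ boundary vertices, so the total error is $O(h)\to 0$ and is summable rather than merely pointwise.

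It then remains to identify the extrinsic total turning: $\sum_{p\in\partial\mathbf l}\alpha(p)\to\int_{\partial\mathbf b}\kappa\,ds$. This is exactly the one-dimensional problem, because the boundary of the control surface is the union of the control polygons of the boundary B\'ezier curves, and the convergence of the total curvature of a control polygon to the total curvature of a (regular $C^2$) B\'ezier curve under subdivision is the curve analogue cited as~\cite{bez-iso}. Combining the two displayed limits yields the boundary statement, and hence the theorem. I expect the main obstacle to be precisely the middle step, namely turning the pointwise limit $\delta(p)\to\alpha(p)$ into a summable one with a uniform $O(h^2)$ bound; this is where the regularity hypotheses genuinely enter, since it requires the control net to converge in $C^1$ (in fact $C^2$) together with the bounded-valence structure of de Casteljau subdivision, and care is needed to ensure the fan of faces at each boundary vertex really sweeps the interior sector monotonically so that the angles add without cancellation.
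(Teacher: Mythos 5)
Your proposal follows essentially the same route as the paper: equate the discrete and smooth Gauss--Bonnet identities via the homeomorphism $\chi(\mathbf l)=\chi(\mathbf b)$, thereby reducing the interior sum to a boundary statement, and then settle the boundary statement by the one-dimensional fact that the total curvature of a control polygon converges to that of its B\'ezier curve (the paper's Lemma on total curvature of control polygons, proved via Morin--Goldman convergence of discrete derivatives). Two differences are worth recording. First, you explicitly separate the \emph{intrinsic} boundary angle defect $\delta(p)=\pi-\sum_{f\ni p}\theta_{f,p}$, which is what the discrete Gauss--Bonnet formula actually contains, from the \emph{extrinsic} turning angle of the boundary polygon, which is what the curve lemma controls, and you sketch a summable $O(h^2)$-per-vertex flattening estimate to pass between them; the paper substitutes one for the other without comment, so your extra step fills a genuine gap in its argument (and is the place where the hard analysis lives, as you note). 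Second, you drop the corner terms: an open B\'ezier patch has four corners, so the smooth Gauss--Bonnet identity reads $\int_{\mathring{\mathbf b}}K\,dA=2\pi\chi-\int_{\partial\mathbf b}\kappa_g\,ds-T_\kappa(\mathbf b)$ with $T_\kappa(\mathbf b)\neq 0$ in general, and the correct residual statement is $\sum_{p\in\partial\mathbf l}\delta(p)\rightarrow T_\kappa(\mathbf b)+\int_{\partial\mathbf b}\kappa\,ds$, not the formula you display. The paper handles this in its opening paragraph by observing that the exterior angles of $\mathbf l$ at the four corner vertices converge to those of $\mathbf b$ by first-derivative convergence, so the corner contributions cancel in the limit; you should add that observation, after which your argument is complete and in fact more careful than the paper's.
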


\begin{theorem}
For a closed B\'ezier surface $\mathbf{b}$, suppose that $\mathbf{l}$ is produced by sufficiently many subdivisions, then we have
$$\sum_{p \in \mathbf{l}} K(p) = \int_{\tilde{\mathbf{b}}} K dA,$$
where $K(p)$ is the total Gaussian curvature at $p \in \mathbf{l}$, $\tilde{\mathbf{b}}$ is a smooth approximation of $\mathbf{b}$, and $K$ is the Gaussian curvature of $\tilde{\mathbf{b}}$.
\end{theorem}

\section{Preliminaries}

A two-dimensional {\em B\'ezier surface} can be defined as a parametric surface where the position of a point $\mathbf{b}$ as a function of the parametric coordinates $u$, $v$ is given by:
$$\mathbf{b}(u,v) = \sum_{i=0}^n \sum_{j=0}^m B_i^n(u) B_j^m(v) p_{ij},$$
evaluated over the unit square, where $B_i^n(u)=\binom{n}{i} u^i (1-u)^{n-i}$ is a Bernstein polynomial, and $\binom{n}{i}=\frac{n!}{i!(n-i)!}$ is the binomial coefficient. A {\em control net} is then formed by connecting the sequence $\{p_{i0}, p_{i1},\ldots,p_{im}\}$ for each fixed $i$ and $\{p_{0j}, p_{1j},\ldots,p_{nj}\}$ for each fixed $j$.

Let $\mathbf{l}(u,v)$ denote the uniform parametrization of the control net over $(\frac{i}{n} \times [0,1]) \cup ([0,1] \times \frac{j}{m})$ for $i=0, 1, \ldots, n$ and $j=0, 1, \ldots, m$. That is, for each $i$ and $j$,
$$\mathbf{l}(\frac{i}{n},\frac{j}{m})=p_{ij},$$
$\mathbf{l}(\frac{i}{n},v)$ is linear for $v \in [\frac{j}{m}, \frac{j+1}{m}]$, and similarly $\mathbf{l}(u,\frac{j}{m})$ is linear for $u \in [\frac{i}{n}, \frac{i+1}{n}]$.

We assume the following regularity: any two control points are not the same except possibly for end points, and for any four adjacent control points $p_{i,j}, p_{i,j+1}, p_{i+1,j}$ and $p_{i+1,j+1}$, any three of them are non-collinear\footnote{This simplifies the following parametrization, and does not impact on isotopy as non-collinearity can be fulfilled by small perturbations which preserve isotopy \cite{L.-E.Andersson2000}.}. Then the region in space determined by these four points consists of two triangles which can be uniformly parametrized. 

A uniform parametrization can be obtained in the following way. Draw the diagonal connecting $p_{i,j}$ and $p_{i+1,j+1}$, and parametrize the line segment $\overline{p_{i,j} p_{i+1,j+1}}$ uniformly, i.e. it interpolates from $\mathbf{l}(\frac{i}{n},\frac{j}{m})$ to $\mathbf{l}(\frac{i+1}{n},\frac{j+1}{m})$ linearly. Connect each point $\mathbf{l}(a,b)$ along $\overline{p_{i,j} p_{i+1,j+1}}$ with the point $\mathbf{l}(a,\frac{j}{m})$ to form a line segment, and uniformly parametrize the line segment. The union of these line segments form the triangle $\triangle{p_{ij}p_{i+1,j}p_{i+1,j+1}}$. Similarly we can obtain the triangle $\triangle{p_{ij}p_{i,j+1}p_{i+1,j+1}}$. All these triangles form a triangulated surface. We designate the union of all such triangles as a {\em control surface}, and denote its parametrization by $\mathbf{l}(u,v)$.

\subsection{Subdivision and properties associated to B\'ezier surfaces}

The de Casteljau algorithm (subdivision) associated to B\'ezier curves and surfaces is fundamental in the curve and surface design, yet it is surprisingly simple \cite{G.Farin1990}. It recursively generates new sets of control points, and divide the curves or surfaces into sub pieces. Each sub piece can be totally defined by a corresponding subset of the control points\footnote{For curves, this subset determines a sub-control polygon}. 

The four sides of a B\'ezier surface, $\mathbf{b}(u,0)$, $\mathbf{b}(u,1)$, $\mathbf{b}(0,v)$ and $\mathbf{b}(1,v)$, are B\'ezier curves, whose control polygons are exactly the sides of the control surface. This fact will be used to study the total curvature of the boundaries of the smooth and triangulated surfaces in Section~\ref{sec:crc}. 

\subsection{Hausdorff distance} 
For a B\'ezier curve, subdivision generates new control polygons more closely approximating the curve under Hausdorff distance \cite{J.Munkres1999}. Analogously for B\'ezier surfaces, subdivision generates new control surfaces more closely approximating the curve under Hausdorff distance. Set 
$$\mathbf{q}_i(v)=\sum_{j=0}^m B_j^m(v) p_{ij},$$
then $$\mathbf{b}(u,v) = \sum_{i=0}^n  B_i^n(u) \mathbf{q}_i(v).$$
For a fixed $v^{\ast} \in [0,1]$, $\mathbf{b}_{v^{\ast}}(u)$ is a B\'ezier curve determined by the control points:
\begin{align*}
\mathbf{q}_0(v^{\ast}) &=\sum_{j=0}^m B_j^m(v^{\ast}) p_{0j}\\
\mathbf{q}_1(v^{\ast}) &=\sum_{j=0}^m B_j^m(v^{\ast}) p_{1j}\\
 & \vdots  \\
\mathbf{q}_n(v^{\ast}) &=\sum_{j=0}^m B_j^m(v^{\ast}) p_{nj}.
\end{align*}
 Let $\mathbf{l}_{v^{\ast}}(u)$ be the p.l. curve of $\mathbf{l}(u, v)$ obtained by fixing $v=v^{\ast}$. We will show $\mathbf{l}_{v^{\ast}}(u)$ converges to $\mathbf{b}_{v^{\ast}}(u)$. It was well known \cite{Nairn-Peters-Lutterkort1999} that the control polygon converges in distance to a B\'ezier curve exponentially (with a rate of $O(\frac{1}{2^k})$ where $k$ is the number of subdivisions). So the polygon $(\mathbf{q}_0(v^{\ast}), \mathbf{q}_1(v^{\ast}), \ldots, \mathbf{q}_n(v^{\ast}))$ converges in distance to the B\'ezier curve $\mathbf{b}_{v^{\ast}}(u)$. Thus, it suffices to show that $\mathbf{l}_{v^{\ast}}(u)$ converges to the polygon $(\mathbf{q}_0(v^{\ast}), \mathbf{q}_1(v^{\ast}), \ldots, \mathbf{q}_n(v^{\ast}))$. Note that $\mathbf{l}_{v^{\ast}}(u)$ is a polygon with vertices $\{\mathbf{l}_{v^{\ast}}(0), \mathbf{l}_{v^{\ast}}(\frac{1}{n}), \ldots, \mathbf{l}_{v^{\ast}}(1)\}$, so it suffices to show that $\mathbf{l}_{v^{\ast}}(\frac{i}{n})$ converges to $\mathbf{q}_i(v^{\ast})$ for each $i=0, 1, \ldots, n$. 

Start from $i=0$. Note that $\mathbf{l}_v(0)$ is a polygon with vertices $\{p_{00}, p_{01}, \ldots, p_{0m}\}$, while $\mathbf{q}_0(v)$ is a B\'ezier curve determined by the same set of points. So $\mathbf{l}_v(0)$ is the control polygon of $\mathbf{q}_0(v)$. Because of the exponential convergence of the control polygon to a B\'ezier curve, $\mathbf{l}_v(0)$  exponentially converges to $\mathbf{q}_0(v)$ for any $v$, and of course, particularly for $v=v^{\ast}$. Similarly $\mathbf{l}_{v^{\ast}}(\frac{i}{n})$  exponentially converges to $\mathbf{q}_i(v^{\ast})$ for each $i=1, \ldots, n$. 

Since $v^{\ast} \in [0,1]$ is an arbitrary fixed $v$-value, we have that $\mathbf{l}(u,v)$ exponentially converges to $\mathbf{b}(u,v)$ for each $(u,v) \in [0,1] \times [0,1]$. This implies the control surface $\mathbf{l}(u,v)$ exponentially converges\footnote{Subdivision is applied in both $u$ and $v$ directions. If it is in only one direction, the convergence fails.} in Hausdorff distance to the B\'ezier surface $\mathbf{b}(u,v)$. 

\subsection{Tangent and normal vectors}
The lemma below follows from that the first discrete derivatives of $\mathbf{l}(u,v)$ converge to the corresponding derivatives of $\mathbf{b}(u,v)$ via subdivision \cite{Morin_Goldman2001}.


\begin{lemma}\label{lem:normal}
The tangent and normal vectors of control surface $\mathbf{l}$ at vertices converge to the corresponding tangent and normal vectors of B\'ezier surface $S$.
\end{lemma}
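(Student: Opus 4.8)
The plan is to reduce the statement about tangent and normal vectors to the convergence of first discrete derivatives, which is exactly what the cited result supplies, and then push that convergence through the cross product that defines the normal. First I would write down the partial derivatives of the B\'ezier surface explicitly. Differentiating $\mathbf{b}(u,v) = \sum_{i,j} B_i^n(u) B_j^m(v) p_{ij}$ gives
$$\mathbf{b}_u(u,v) = n \sum_{i=0}^{n-1}\sum_{j=0}^{m} B_i^{n-1}(u) B_j^m(v)\,(p_{i+1,j}-p_{ij}),$$
and the symmetric formula for $\mathbf{b}_v$. The point to extract is that $\mathbf{b}_u$ is itself a B\'ezier surface whose control points are the scaled forward differences $n(p_{i+1,j}-p_{ij})$, and likewise for $\mathbf{b}_v$. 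On the control-surface side, the discrete tangent vector at a vertex $p_{ij}$ in the $u$-direction is exactly (a scalar multiple of) such a forward difference, so the array of discrete tangents at the vertices is precisely the control net of the derivative surface $\mathbf{b}_u$.

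Next I would invoke the convergence already established for control nets. By the Hausdorff-distance argument of the previous subsection, applied now to the derivative surface $\mathbf{b}_u$ rather than to $\mathbf{b}$ itself, the control net formed by the differences $n(p_{i+1,j}-p_{ij})$ converges exponentially to $\mathbf{b}_u$ under subdivision; this is the content of the cited statement \cite{Morin_Goldman2001} that the first discrete derivatives of $\mathbf{l}$ converge to the derivatives of $\mathbf{b}$. Running the same argument in the $v$-direction yields convergence of the discrete $v$-tangents to $\mathbf{b}_v$. Thus the tangent vectors of $\mathbf{l}$ at its vertices converge to the corresponding tangent vectors of the B\'ezier surface at the matching parameter values.

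Finally I would obtain the normal vectors as cross products. The unit normal of the smooth surface is $\mathbf{n} = (\mathbf{b}_u \times \mathbf{b}_v)/\|\mathbf{b}_u \times \mathbf{b}_v\|$, and the discrete unit normal at a vertex of $\mathbf{l}$ is the normalized cross product of its two discrete tangents. Since the cross product is bilinear (hence continuous) and normalization is continuous wherever its argument is nonzero, convergence of the tangents transfers to convergence of the normals.

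The step I expect to be the main obstacle, and the only place where a hypothesis does real work, is this last normalization: it is legitimate only if $\mathbf{b}_u \times \mathbf{b}_v$ stays bounded away from zero. Here I would use the regularity assumption on the B\'ezier surface together with compactness of the parameter square $[0,1]\times[0,1]$ to conclude that $\|\mathbf{b}_u \times \mathbf{b}_v\|$ attains a positive minimum; once the discrete tangents are close enough to the smooth ones (guaranteed by the exponential convergence above), their cross product is likewise bounded away from zero, the normalization map is uniformly Lipschitz on the relevant region, and convergence of the normals follows. A minor bookkeeping point, worth a sentence, is the choice of discrete tangent at an interior vertex: one should fix a consistent one-sided or averaged difference so that the vertex tangents and the derivative-surface control net line up, but any such consistent choice shares the same smooth limit.
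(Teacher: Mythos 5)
Your proposal is correct and follows essentially the same route as the paper, which justifies the lemma in a single sentence by citing the convergence of first discrete derivatives under subdivision (Morin--Goldman). You supply details the paper omits entirely --- the identification of the discrete tangents with the control net of the derivative surface, the passage to normals via the cross product, and the non-degeneracy of $\mathbf{b}_u \times \mathbf{b}_v$ needed to normalize --- all of which are sound and strengthen rather than alter the argument.
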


The {\em tangent bounding cone} of a curve is the smallest direction cone that contains all unit tangent vectors of the curve, denoted by $<a,\theta>$, where $a$ is the axis of the cone, and $\theta$ is the span of the cone.  

For a surface, we consider the normal bounding cone $<a_n, \theta_n>$ that bounds all normal vectors, and the isoparametric tangent bounding cones $<a_u, \theta_u>$ and $<a_v, \theta_v>$ that bound all tangent vectors in the corresponding isoparametric direction. A surface is non-self-intersecting if the following conditions are satisfied \cite{ho2001surface}:
\begin{enumerate}[i]
\item $\theta_n < \pi$,
\item $|a_n \cdot a_u| < \cos \frac{\theta_u}{2}$, and 
\item $|a_n \cdot a_v| < \cos \frac{\theta_v}{2}$.
\end{enumerate}

Specifically for a control surface, we associate each triangle a normal vector that perpendicular to the plane determined by the triangle. The normal bonding cone of the control surface is then defined. 

\section{Topology}

In this section, we show that, via subdivision, the control surface $\mathbf{l}$ becomes
\begin{enumerate}
\item non-self-intersecting and homeomorphic to $\mathbf{b}$, and
\item ambient isotopic to $\mathbf{b}$,
\end{enumerate}
where $\mathbf{b}$ is the underlying compact and non-self-intersecting B\'ezier surface.

\subsection{Homeomorphism}

To establish homeomorphism, we first prove that after sufficiently many subdivisions, $\mathbf{l}(u,v)$ will be injective, except possibly for $(u,v)$ along the boundary of the unit square when $\mathbf{l}(u,v)$ is closed. 

\begin{lemma}\label{lem:de-contl}
After sufficiently many subdivisions, there exists $\delta>0$ such that $\mathbf{l}(u,v)$ is injective for $(u,v) \in D_{\delta}$, where $D_{\delta}$ is a closed disk of radius $\delta$ in $[0,1] \times [0,1]$. 
\end{lemma}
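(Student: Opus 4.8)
The plan is to show that over a sufficiently small parameter patch the control surface $\mathbf{l}$ inherits the non-self-intersection criterion (conditions (i)--(iii) from \cite{ho2001surface}) from the smooth surface $\mathbf{b}$, and that this criterion forces injectivity. The geometric intuition is that on a small enough parameter disk the regular surface $\mathbf{b}$ is essentially a graph over its tangent plane, so its normal vectors are confined to a narrow cone; by the convergence of normals (Lemma~\ref{lem:normal}) the same confinement holds for $\mathbf{l}$ after enough subdivisions, and a surface patch whose normals lie in a cone of span less than $\pi$ cannot cross itself, hence is injective.

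First I would exploit the regularity of $\mathbf{b}$. Since $\mathbf{b}$ is a compact regular surface, its unit normal field and its two isoparametric unit tangent fields are continuous, hence uniformly continuous, on the compact square $[0,1]\times[0,1]$. Thus there is a uniform $\delta>0$ such that, over every closed disk $D_{\delta}$ of radius $\delta$, the normal vectors of $\mathbf{b}$ lie in a cone $\langle a_n,\theta_n\rangle$ with $\theta_n$ much smaller than $\pi$, while the isoparametric tangent vectors lie in cones $\langle a_u,\theta_u\rangle$ and $\langle a_v,\theta_v\rangle$ narrow enough that conditions (i)--(iii) hold with a fixed positive margin on each $D_{\delta}$.

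Next I would transfer these cone estimates to $\mathbf{l}$. By Lemma~\ref{lem:normal}, after sufficiently many subdivisions the per-triangle normals and the discrete isoparametric tangents of $\mathbf{l}$ over $D_{\delta}$ differ from the corresponding vectors of $\mathbf{b}$ by less than the margin secured above; because the exponential convergence is uniform in the parameter, a single number of subdivisions works for every $D_{\delta}$ simultaneously. Consequently the normal and tangent bounding cones of $\mathbf{l}$ restricted to $D_{\delta}$ still satisfy (i)--(iii), and the criterion of \cite{ho2001surface} yields that $\mathbf{l}|_{D_{\delta}}$ is non-self-intersecting, i.e. injective. Setting $\delta$ equal to the radius found in the first step completes the argument.

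The main obstacle is handling the normals of the polyhedral surface correctly. Lemma~\ref{lem:normal} is phrased for tangents and normals at vertices, whereas the non-self-intersection criterion requires the per-triangle normal cone defined in Section~2. I would bridge this by noting that each triangle normal is the normalized cross product of two edge vectors, and these edge vectors are exactly the first discrete derivatives that converge to the smooth isoparametric tangents \cite{Morin_Goldman2001}; hence each triangle normal converges to the smooth unit normal at the corresponding parameter point. As subdivision refines $D_{\delta}$ into many small triangles, the normals of \emph{all} of them must be shown to stay close to the smooth normals sampled over $D_{\delta}$, so that the discrete normal cone over $D_{\delta}$ keeps span less than $\pi$. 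Establishing this closeness uniformly over every triangle meeting $D_{\delta}$, rather than only at isolated vertices, is the delicate step, after which injectivity follows directly from the cone criterion.
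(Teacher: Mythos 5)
Your proposal is correct and follows essentially the same route as the paper's own proof: both derive the smallness of the normal and isoparametric tangent bounding cones of $\mathbf{l}$ over $D_{\delta}$ from the convergence of the discrete derivatives of $\mathbf{l}$ to the derivatives of $\mathbf{b}$ together with the small variation of those derivatives on a small disk, and then invoke the non-self-intersection criterion (i)--(iii) of \cite{ho2001surface}. Your closing observation about needing to control the per-triangle normals uniformly over all triangles meeting $D_{\delta}$, rather than only at vertices, is a genuine subtlety that the paper's proof passes over silently, but it does not alter the structure of the argument.
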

\begin{proof}
We adopt discrete derivatives \cite{Morin_Goldman2001} for the control surface $\mathbf{l}(u,v)$. Consider the discrete derivatives at a point $(\frac{i}{n}, \frac{j}{m})$ in the direction of $u$ and $v$. Denote the derivatives as $\mathbf{l}_u(\frac{i}{n}, \frac{j}{m})$ and $\mathbf{l}_v(\frac{i}{n}, \frac{j}{m})$. Morin and Goldman \cite{Morin_Goldman2001} showed that $\mathbf{l}_u(\frac{i}{n}, \frac{j}{m})$ and $\mathbf{l}_v(\frac{i}{n}, \frac{j}{m})$ converge to the derivatives of the B\'ezier surface, $\mathbf{b}_u(\frac{i}{n}, \frac{j}{m})$ and $\mathbf{b}_v(\frac{i}{n}, \frac{j}{m})$, under subdivision. 

Since $\mathbf{b}$ is smooth, if $\delta$ is sufficiently small, then $||\mathbf{b}_u(u_1,v_1)-\mathbf{b}_u(u_2,v_2)||$ and $||\mathbf{b}_v(u_1,v_1)-\mathbf{b}_v(u_2,v_2)||$ are sufficiently small over $D_{\delta}$. Therefore, provided sufficiently many subdivisions,  $|\mathbf{l}_u(u_1,v_1)-\mathbf{l}_u(u_2,v_2)|$ and $|\mathbf{l}_v(u_1,v_1)-\mathbf{l}_v(u_2,v_2)|$ is sufficiently small over $D_{\delta}$. It follows that the normal and isoparametric tangent bounding cones have sufficiently small spanning angles for the sub-surface of $\mathbf{l}$ corresponding to $(u,v)\in D_{\delta}$. 

Note that the sub-surface of $\mathbf{l}$ corresponding to  the sub-domain $D_{\delta}$ converges to a plane. It follows that $|a_n \cdot a_u|$ and $|a_n \cdot a_v|$ become small enough, where $a_n, a_u$ and $a_v$ are similar as those in the above conditions i, ii and iii. Therefore, the conditions i, ii and iii can be fulfilled, and the conclusion follows. \hfill $\boxempty$\\
\end{proof}

When will consider the non-self-intersection problem mainly for closed surfaces. The proof for open surfaces will follow easily. Suppose that the surfaces are closed, i.e. $\mathbf{l}(u,0)=\mathbf{l}(u,1)$ and $\mathbf{l}(0,v)=\mathbf{l}(1,v)$ for all $u, v \in [0,1]$. Consider a point $\mathbf{l}(u^{\ast},0)$ along $\mathbf{l}(u,0)$, and a point $\mathbf{l}(0,v^{\ast})$ along $\mathbf{l}(0,v)$. Denote the neighborhood of $\mathbf{l}(u^{\ast},0)$ as $U_a$, that is, $U_a$ is the image of 
$$([u^{\ast}-a,u^{\ast}+a] \times [0,a]) \cup ([u^{\ast}-a,u^{\ast}+a] \times [1-a,1]).$$
Similarly denote a neighborhood of $\mathbf{l}(0,v^{\ast})$ as
$$([0,b] \times [v^{\ast}-b,v^{\ast}+b]) \cup ([1-b,1] \times [v^{\ast}-b,v^{\ast}+b]).$$

\begin{lemma}\label{lem:west}
After sufficiently many subdivisions, there exist $a, b>0$ small enough such that $U_a$ and $U_b$ are not self-intersecting, i.e. $\mathbf{l}(u,v)$ for $(u,v)$ being restricted within the subdomains, is injective except at the endpoints. 
\end{lemma}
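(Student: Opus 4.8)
The plan is to reduce the seam neighborhoods $U_a$ and $U_b$ to the interior situation already handled in Lemma~\ref{lem:de-contl}. I treat only $U_a$; the argument for $U_b$ is identical after exchanging the roles of $u$ and $v$. The two parameter strips comprising $U_a$, namely $[u^{\ast}-a,u^{\ast}+a]\times[0,a]$ and $[u^{\ast}-a,u^{\ast}+a]\times[1-a,1]$, are glued in space along the seam curve $\mathbf{l}(u,0)=\mathbf{l}(u,1)$. I would introduce a shifted coordinate $\tilde v$, setting $\tilde v = v$ on the first strip and $\tilde v = v-1$ on the second, so that $U_a$ becomes the image of the single rectangle $R_a=[u^{\ast}-a,u^{\ast}+a]\times[-a,a]$ under the reparametrized map $(u,\tilde v)\mapsto \mathbf{l}(u,\tilde v \bmod 1)$. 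Under this identification the seam point $q:=\mathbf{b}(u^{\ast},0)=\mathbf{b}(u^{\ast},1)$ sits in the interior of the domain, and $U_a$ is a genuine surface patch rather than two disconnected pieces.

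First I would work with the smooth surface. Since $\mathbf{b}$ is compact, non-self-intersecting, and smooth across the seam, $q$ is an ordinary point of the embedded surface, so for $a$ small enough $\mathbf{b}(R_a)$ is an embedded disk lying arbitrarily close to the tangent plane $T_q\mathbf{b}$. Consequently its normal bounding cone $<a_n,\theta_n>$ has $\theta_n$ as small as desired, and the isoparametric tangent bounding cones $<a_u,\theta_u>$ and $<a_v,\theta_v>$ are likewise narrow and nearly orthogonal to $a_n$. Thus the three non-self-intersection conditions $\theta_n<\pi$, $|a_n\cdot a_u|<\cos\frac{\theta_u}{2}$, and $|a_n\cdot a_v|<\cos\frac{\theta_v}{2}$ hold for the smooth patch $\mathbf{b}(R_a)$ with a definite margin to spare. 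I would fix such an $a$, and the analogous $b$ for $U_b$.

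Next I would transfer these strict inequalities to the control surface by convergence, exactly as in the proof of Lemma~\ref{lem:de-contl}. By the Hausdorff convergence established above, the reparametrized control sub-surface over $R_a$ converges to $\mathbf{b}(R_a)$; by Lemma~\ref{lem:normal} the discrete normals and isoparametric tangents at the vertices converge to those of $\mathbf{b}$. Hence, after sufficiently many subdivisions, the normal bounding cone and the two isoparametric tangent bounding cones of $\mathbf{l}$ over $R_a$ are as close as we wish to the corresponding cones of $\mathbf{b}(R_a)$, and in particular still satisfy conditions i, ii, and iii. The criterion of \cite{ho2001surface} then forces $\mathbf{l}$ restricted to $R_a$ to be injective, i.e. $U_a$ is non-self-intersecting except at the identified seam endpoints. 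The same argument yields the conclusion for $U_b$, and taking the larger of the two required subdivision levels finishes the proof.

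The step I expect to be delicate is the control of the \emph{joint} bounding cones across the seam. The two parameter strips feed triangles into $U_a$ from the $v\approx 0$ side and the $v\approx 1$ side, and the non-self-intersection test requires all of their normals to lie in one common narrow cone. This is where smoothness (or at least $G^1$-regularity) of $\mathbf{b}$ at the seam is essential: it guarantees that the smooth normals on the two sides already agree in the limit, so that Lemma~\ref{lem:normal}, applied uniformly up to the seam, keeps the discrete normals from the two strips within a single small cone. Without such regularity the normal cone $<a_n,\theta_n>$ over $U_a$ could fail to satisfy $\theta_n<\pi$ no matter how small $a$ is, and the reduction to Lemma~\ref{lem:de-contl} would break down.
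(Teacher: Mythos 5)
Your argument is essentially the paper's: the paper likewise splits $U_a$ into the two halves $U_a^+$ and $U_a^-$, applies Lemma~\ref{lem:de-contl} to each, and concludes that the pasted union cannot self-intersect because flatness plus sufficiently many subdivisions make the joint normal bounding cone of $U_a$ narrow; your shifted coordinate $\tilde v$ merely makes the pasting explicit before running the same bounding-cone test. Your closing caveat about $G^1$-regularity of $\mathbf{b}$ across the seam is well taken and worth keeping --- the paper itself later concedes that a closed B\'ezier surface need not be smooth where opposite edges are identified, so that hypothesis is doing real work that the paper's one-line justification (``the condition can be satisfied by choosing small $a,b$ and sufficiently many subdivisions'') passes over.
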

\begin{proof}
Denote the half of $U_a$ corresponding to $[u^{\ast}-a,u^{\ast}+a] \times [0,a]$ as $U^+_a$, and the other half corresponding to $[u^{\ast}-a,u^{\ast}+a] \times [1-a,1]$ as $U^-_a$. By Lemma~\ref{lem:de-contl}, we can choose $a>0$ small enough such that both $U^+_a$ and $U^-_a$ are not self-intersecting. The sub-surface $U_a$ is obtained by pasting $U^+_a$ and $U^-_a$ together along the common edge. They can not intersect, if $U^+_a$ and $U^-_a$ are flat enough, i.e. the the change of normals is small enough. That is, if the normal bounding cone of $U_a$ has a small spanning angle. The condition can be satisfied by choosing small $a,b$ and sufficiently many subdivisions. Provided this, $U_a$ is not self-intersecting. Similarly for $U_b$. \hfill $\boxempty$
\end{proof}

\begin{proposition}\label{prop:aste}
After sufficiently many subdivisions, there exists $d>0$ such that if $0<|\mathbf{b}(u_1,v_1)-\mathbf{b}(u_2,v_2)|<d$ then $\mathbf{l}(u_1,v_1) \neq \mathbf{l}(u_2,v_2)$, except at the end points.
\end{proposition}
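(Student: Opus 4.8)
The plan is to exploit that $\mathbf{b}$, being continuous, injective, and defined on a compact parameter domain, is a homeomorphism onto its image, so that closeness of two image points forces closeness of the underlying parameters; the local injectivity already established in Lemma~\ref{lem:de-contl} and Lemma~\ref{lem:west} then finishes the argument. Concretely, I would first fix the number of subdivisions large enough that the conclusions of Lemma~\ref{lem:de-contl} and Lemma~\ref{lem:west} hold, producing a uniform radius $\delta>0$ for which Lemma~\ref{lem:de-contl} applies to disks of radius $\delta$ centered anywhere in $[0,1]\times[0,1]$, together with boundary neighborhoods $U_a$ and $U_b$ on which $\mathbf{l}$ is injective (except at identified endpoints). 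Both $\delta$ and the sizes $a,b$ are selected from properties of $\mathbf{b}$ alone and are then validated by sufficiently many subdivisions, so they may be regarded as fixed before $d$ is chosen.

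For a closed surface I would pass to the quotient $Q$ of $[0,1]\times[0,1]$ obtained by identifying $(u,0)\sim(u,1)$ and $(0,v)\sim(1,v)$; the non-self-intersection hypothesis on $\mathbf{b}$ says precisely that the induced map $\bar{\mathbf{b}}\colon Q\to\mathbb{R}^3$ is injective. Since $Q$ is compact and $\mathbb{R}^3$ is Hausdorff, $\bar{\mathbf{b}}$ is a homeomorphism onto its image, and hence $\bar{\mathbf{b}}^{-1}$ is uniformly continuous on that image. Using this uniform continuity I would choose $d>0$ so that $0<|\mathbf{b}(u_1,v_1)-\mathbf{b}(u_2,v_2)|<d$ forces the parameters $(u_1,v_1)$ and $(u_2,v_2)$ to lie at quotient distance less than $\min(\delta,a,b)$ in $Q$; the strict positivity of the left-hand side guarantees, by injectivity of $\bar{\mathbf{b}}$, that these two parameters are genuinely distinct in $Q$.

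Two such nearby parameters fall into one of two cases. Either they lie in a common disk of radius $\delta$ in the interior, in which case Lemma~\ref{lem:de-contl} gives $\mathbf{l}(u_1,v_1)\neq\mathbf{l}(u_2,v_2)$; or they straddle an identified boundary and therefore lie together in one of the neighborhoods $U_a$, $U_b$, in which case Lemma~\ref{lem:west} yields the same conclusion, with coincidence permitted only at the endpoints. The open-surface case is the same argument with $Q=[0,1]\times[0,1]$ carrying no identification, so that only Lemma~\ref{lem:de-contl} is invoked.

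The main obstacle is the bookkeeping near the identified boundary of a closed surface: a pair of parameters that is close in the quotient $Q$ may be far apart in $[0,1]\times[0,1]$, so no single interior disk covers every close pair. The resolution is to assemble the interior disks of Lemma~\ref{lem:de-contl} and the boundary neighborhoods of Lemma~\ref{lem:west} into a finite cover of $Q$, extract a Lebesgue-type radius for that cover, and take $\min(\delta,a,b)$ below it so that every pair at quotient distance less than this bound is enclosed in a single member of the cover. Particular care is needed at the point of $Q$ where all four corners of the square are identified; there I would further shrink $a$ and $b$ so that the relevant boundary neighborhoods still contain the straddling pair, after which the two cases above apply verbatim and the proposition follows.
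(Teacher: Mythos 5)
Your proposal is correct and follows essentially the same route as the paper: both arguments use the fact that an injective continuous map on a compact domain is a homeomorphism onto its image to choose $d$ so that closeness of $\mathbf{b}(u_1,v_1)$ and $\mathbf{b}(u_2,v_2)$ forces parameter closeness, and both then split into the interior case (Lemma~\ref{lem:de-contl}) and the identified-boundary case (Lemma~\ref{lem:west}). Your quotient-space formulation with a Lebesgue-number argument is a tidier account of the same idea; it makes explicit why the two cases are exhaustive and handles the corner where all four vertices of the square are identified, points the paper's proof only asserts by fiat when it declares that $d$ is ``small enough such that either Case 1 or Case 2 holds.''
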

\begin{proof}
Suppose that the surfaces are closed. Recall that  $U_a, U_b$ in Lemma~\ref{lem:west} are the neighborhoods of two end points along $\mathbf{l}(u,0)$ and $\mathbf{l}(0,v)$ respectively. Denote the union of all the neighborhoods of points along $\mathbf{l}(u,0)$ as $S_a$ ($S$ represents a strip), and all the neighborhoods of the points along $\mathbf{l}(0,v)$ as $S_b$. 

Denote the truncated surface $\mathbf{l} \setminus (S_a \cap S_b)$ as $\hat{\mathbf{l}}$, and the corresponding underlying B\'ezier surface as $\hat{\mathbf{b}}$. Suppose that $d>0$ is small enough such that either the following Case 1 or Case 2 holds. 

Case 1: Both $\mathbf{b}(u_1,v_1)$ and $\mathbf{b}(u_2,v_2)$ lie in $\hat{\mathbf{b}}$. (If necessary, choose smaller $U_a, U_b$.) Let $m$ be the minimal separation distance of $\hat{\mathbf{b}}$. Since $\hat{\mathbf{b}}$ is the image of a homeomorphism, there exists an $0<d<m$ such that if $|\mathbf{b}(u_1,v_1) - \mathbf{b}(u_2,v_2)|<d$, then $|(u_1,v_1) - (u_2,v_2)| < \delta$ (where $\delta$ is given in Lemma~\ref{lem:de-contl}) so that the corresponding $\mathbf{l}(u_1,v_1) \neq \mathbf{l}(u_2,v_2)$, by Lemma~\ref{lem:de-contl}.

Case 2: Both  $\mathbf{b}(u_1,v_1)$ and $\mathbf{b}(u_2,v_2)$ lie in $\mathbf{b} \setminus \hat{\mathbf{b}}$, and  either $\mathbf{l}(u_1,v_1), \mathbf{l}(u_2,v_2) \in U_a$ or $\mathbf{l}(u_1,v_1), \mathbf{l}(u_2,v_2) \in U_b$. The the conclusion follows from Lemma~\ref{lem:west}. 

If the surfaces are open, the proof is the same as Case 1. \hfill $\boxempty$
\end{proof}

\begin{lemma}\label{lem:nsint}
After sufficiently many subdivisions, $\mathbf{l}(u,v)$ becomes injective, except possibly at the endpoints.
\end{lemma}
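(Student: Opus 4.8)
The plan is to upgrade the local and near-diagonal injectivity already secured into \emph{global} injectivity by separating parameter pairs according to how far apart their B\'ezier images sit, and then treating the two resulting regimes with two different tools: Proposition~\ref{prop:aste} when the images are close, and the uniform Hausdorff convergence of $\mathbf{l}$ to $\mathbf{b}$ when the images are far apart.

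First I would record that $\mathbf{b}$ is itself non-self-intersecting, hence injective except at the endpoints. Consequently, for any two distinct parameter pairs $(u_1,v_1)\neq(u_2,v_2)$ that are not the permitted endpoint exception, we have $|\mathbf{b}(u_1,v_1)-\mathbf{b}(u_2,v_2)|>0$. I would then fix the threshold $d>0$ furnished by Proposition~\ref{prop:aste}, together with the number of subdivisions it requires; the key point is that $d$ depends only on the fixed smooth surface $\mathbf{b}$ (through the minimal separation distance of $\hat{\mathbf{b}}$ and the seam neighborhoods $U_a,U_b$ of Lemma~\ref{lem:west}), not on how much we subsequently subdivide.

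Now I would split into two regimes. In the \emph{near} regime $0<|\mathbf{b}(u_1,v_1)-\mathbf{b}(u_2,v_2)|<d$, Proposition~\ref{prop:aste} applies verbatim and gives $\mathbf{l}(u_1,v_1)\neq\mathbf{l}(u_2,v_2)$. In the \emph{far} regime $|\mathbf{b}(u_1,v_1)-\mathbf{b}(u_2,v_2)|\geq d$, I would invoke the exponential Hausdorff convergence established in the preliminaries to guarantee, after sufficiently many subdivisions, the uniform bound $|\mathbf{l}(u,v)-\mathbf{b}(u,v)|<d/3$ over the whole parameter square. The triangle inequality then yields
$$|\mathbf{l}(u_1,v_1)-\mathbf{l}(u_2,v_2)| \ge |\mathbf{b}(u_1,v_1)-\mathbf{b}(u_2,v_2)| - |\mathbf{b}(u_1,v_1)-\mathbf{l}(u_1,v_1)| - |\mathbf{b}(u_2,v_2)-\mathbf{l}(u_2,v_2)| > d - \tfrac{d}{3} - \tfrac{d}{3} > 0,$$
so again $\mathbf{l}(u_1,v_1)\neq\mathbf{l}(u_2,v_2)$. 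Taking the maximum of the (finitely many) subdivision counts required by the two regimes makes both conditions hold at once, and distinctness of the $\mathbf{l}$-images across every non-endpoint pair is precisely injectivity except at the endpoints, as claimed.

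The main obstacle I anticipate is bookkeeping the single subdivision level rather than any individual estimate: I must freeze $d$ first, as a quantity intrinsic to $\mathbf{b}$, before demanding the Hausdorff bound $|\mathbf{l}-\mathbf{b}|<d/3$, since otherwise the threshold and the approximation error could chase each other and the quantifiers would not close. A secondary point to verify is that the endpoint exceptions handled separately in Lemma~\ref{lem:west} and folded into Case~2 of Proposition~\ref{prop:aste} are exactly the pairs excluded here, so that no genuine self-intersection slips through the glued seams of a closed surface.
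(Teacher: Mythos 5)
Your argument is correct and is essentially the paper's own proof recast in contrapositive form: the paper assumes $\mathbf{l}(u_1,v_1)=\mathbf{l}(u_2,v_2)$, uses the uniform Hausdorff bound $|\mathbf{l}-\mathbf{b}|<d/2$ and the triangle inequality to force $|\mathbf{b}(u_1,v_1)-\mathbf{b}(u_2,v_2)|<d$, and contradicts Proposition~\ref{prop:aste}, which is exactly your near/far dichotomy with the same two ingredients. Your explicit attention to fixing $d$ before choosing the subdivision level is a worthwhile clarification, but the route is the same.
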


\begin{proof}
By sufficiently many subdivisions, we can have $|\mathbf{l}(u,v)-\mathbf{b}(u,v)|<\frac{d}{2}$ for all $u,v \in [0,1]$, where $d$ is the given by Proposition~\ref{prop:aste}. Assume to the contrary there are $(u_1,v_1), (u_2,v_2) \in [0,1] \times [0,1]$ where $(u_1,v_1) \neq (u_2,v_2)$ but $\mathbf{l}(u_1,v_1)=\mathbf{l}(u_2,v_2)$, then 
$$|\mathbf{b}(u_1,v_1)-\mathbf{b}(u_2,v_2)|=|\mathbf{b}(u_1,v_1)-\mathbf{l}(u_1,v_1)-\mathbf{b}(u_2,v_2)+\mathbf{l}(u_2,v_2)|$$ 
$$\leq |\mathbf{b}(u_1,v_1)-\mathbf{l}(u_1,v_1)| + |\mathbf{b}(u_2,v_2)-\mathbf{l}(u_2,v_2)|<d,$$
which contradicts to Proposition~\ref{prop:aste}. \hfill $\boxempty$
\end{proof}

\begin{lemma}\label{lem:homeom}
For open or closed $\mathbf{b}(u,v)$ and $\mathbf{l}(u,v)$, if they are injective for $(u,v) \in (0,1) \times (0,1)$, then they are homeomorphic.
\end{lemma}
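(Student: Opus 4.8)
The plan is to construct an explicit homeomorphism $\mathbf{l} \circ \mathbf{b}^{-1}$ between the two surfaces, exploiting the fact that both are parametrized over the same unit square $[0,1]\times[0,1]$. Since $\mathbf{b}(u,v)$ is assumed injective on $(0,1)\times(0,1)$ and continuous on the compact square, and $\mathbf{l}(u,v)$ is likewise injective there (by hypothesis) and continuous, I would first argue that each map descends to a homeomorphism from an appropriate quotient of the square onto its image. For the open case this is direct; for the closed case one must pass to the quotient of the square that identifies the boundary edges according to the closure conditions $\mathbf{b}(u,0)=\mathbf{b}(u,1)$ and $\mathbf{b}(0,v)=\mathbf{b}(1,v)$ (a torus when both identifications hold), and check that $\mathbf{l}$ respects the same identifications because the control points satisfy the matching endpoint conditions.

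The key steps, in order, would be as follows. First, I would recall the standard topological fact that a continuous bijection from a compact space to a Hausdorff space is a homeomorphism; this is the engine that converts injectivity plus continuity into the desired bicontinuity without having to estimate anything analytically. Second, I would verify that $\mathbf{b}$ and $\mathbf{l}$ are continuous bijections from the same quotient space $Q$ (either the square itself in the open case, or the square with its boundary identifications in the closed case) onto their respective images $\mathbf{b}(Q)$ and $\mathbf{l}(Q)$. Here the injectivity on the open square is promoted to injectivity on $Q$ precisely by the footnote convention that self-intersection is measured modulo the endpoint identifications, so the only coincidences of image values are exactly those forced by the quotient. Third, since $Q$ is compact (a continuous image of the compact square) and $\mathbb{R}^3$ is Hausdorff, each of $\mathbf{b}$ and $\mathbf{l}$ is a homeomorphism onto its image. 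Finally, I would compose: $\mathbf{l}\circ\mathbf{b}^{-1} \colon \mathbf{b}(Q) \to \mathbf{l}(Q)$ is a composition of homeomorphisms and hence a homeomorphism, giving the conclusion.

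The main obstacle I anticipate is the bookkeeping in the closed case: one must be careful that $\mathbf{b}$ and $\mathbf{l}$ induce \emph{the same} quotient structure on the square, so that $\mathbf{b}^{-1}$ and $\mathbf{l}$ can be composed on a common domain. Concretely, I would need to confirm that whenever the parametrization identifies two boundary points (forcing, say, $\mathbf{b}(0,v)=\mathbf{b}(1,v)$), the linear interpolant $\mathbf{l}$ makes exactly the matching identification, which follows from the stated endpoint conditions $p_{0j}=p_{nj}$ and $p_{i0}=p_{im}$ on the control points. A secondary subtlety is ensuring no \emph{unexpected} identifications occur in $\mathbf{l}$ beyond those present in $\mathbf{b}$; this is exactly what the injectivity hypothesis on the open square guarantees, so the two quotient spaces coincide and the composition is well defined. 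The open case avoids all of this since $Q$ is just the square and no identifications are in play, which is why it ``follows easily.''
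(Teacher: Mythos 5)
Your proposal is correct and follows essentially the same route as the paper: both arguments identify $\mathbf{b}$ and $\mathbf{l}$ as quotient maps from the same space $Q$ (the unit square, or the square with opposite edges identified in the closed case) and conclude that the two images are homeomorphic. You merely make explicit the compact-to-Hausdorff continuous-bijection step and the composition $\mathbf{l}\circ\mathbf{b}^{-1}$, which the paper leaves implicit.
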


\begin{proof}
Note that the control surface and a B\'ezier surface are simultaneously open or closed. If the surfaces are open, then the homeomorphism trivially holds, as both of the B\'ezier surface and the control surface are homeomorphic to the unit square. 

If the the surfaces are closed, then we have
$$ \mathbf{l}(u,0)= \mathbf{l}(u,1)\ \text{and}\ \mathbf{l}(0,v)=\mathbf{l}(1,v);$$
$$ \mathbf{b}(u,0)= \mathbf{b}(u,1)\ \text{and}\ \mathbf{b}(0,v)=\mathbf{b}(1,v).$$
for all $u, v \in [0,1]$. So both of the control surface and the B\'ezier surface are homeomorphic to the quotient space obtained from the unit square by pasting its opposite edges with the same direction, which is the fundamental polygon of torus.  \hfill $\boxempty$
\end{proof}

\begin{theorem}\label{thm:homeo}
The control surface will eventually be homeomorphic to an open or closed B\'ezier surface $\mathbf{b}$ via subdivision.
\end{theorem}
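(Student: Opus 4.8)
The plan is to assemble Theorem~\ref{thm:homeo} directly from the lemmas already established in this subsection, since the bulk of the analytic work has been done. The target statement is simply that after sufficiently many subdivisions the control surface $\mathbf{l}$ is homeomorphic to $\mathbf{b}$, for both the open and closed cases. The natural strategy is a two-step argument: first secure injectivity of $\mathbf{l}$ on the open unit square $(0,1)\times(0,1)$, and then invoke the abstract gluing statement of Lemma~\ref{lem:homeom} to upgrade injectivity to a genuine homeomorphism.

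First I would apply Lemma~\ref{lem:nsint}: after sufficiently many subdivisions, $\mathbf{l}(u,v)$ becomes injective except possibly at the endpoints (the boundary of the unit square when the surface is closed). In particular $\mathbf{l}$ is injective on the interior $(0,1)\times(0,1)$. The same interior injectivity holds for $\mathbf{b}$ by the standing hypothesis that the B\'ezier surface is compact and non-self-intersecting. Thus the hypotheses of Lemma~\ref{lem:homeom} are met for both $\mathbf{l}$ and $\mathbf{b}$, and that lemma immediately yields that $\mathbf{l}$ and $\mathbf{b}$ are each homeomorphic to the same model space --- the unit square in the open case, and the fundamental polygon of the torus in the closed case --- hence homeomorphic to each other. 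Since the control surface and the B\'ezier surface are simultaneously open or closed, the two cases exhaust all possibilities and the conclusion follows.

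The one point that deserves care, and which I expect to be the main (if modest) obstacle, is ensuring that the \emph{same} choice of subdivision count simultaneously guarantees all the conditions needed: that $\mathbf{l}$ is injective on the interior, that the endpoint identifications of $\mathbf{l}$ match those of $\mathbf{b}$ so that both quotient to the torus in the closed case, and that the injectivity is promoted to a homeomorphism rather than merely a continuous bijection. The first two are handled by taking the maximum of the finitely many subdivision thresholds arising in Lemmas~\ref{lem:de-contl}, \ref{lem:west}, and \ref{lem:nsint}. For the last, I would note that $\mathbf{l}$ restricted to the compact unit square is a continuous injection (after passing to the quotient in the closed case) into a Hausdorff space, and a continuous bijection from a compact space to a Hausdorff space is automatically a homeomorphism; this is exactly the mechanism already implicit in Lemma~\ref{lem:homeom}, so no new argument is required beyond recording that the domains in question are compact.

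In summary, the proof is essentially a bookkeeping assembly: choose the number of subdivisions large enough to satisfy Lemma~\ref{lem:nsint}, observe that interior injectivity of both surfaces places us under the hypothesis of Lemma~\ref{lem:homeom}, and conclude homeomorphism via the compact-to-Hausdorff principle. No genuinely new estimate is needed at this stage, because the convergence of tangent and normal data (Lemma~\ref{lem:normal}) and the injectivity estimates (Proposition~\ref{prop:aste}) have already carried the analytic weight.
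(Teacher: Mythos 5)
Your proposal is correct and follows essentially the same route as the paper, whose proof of Theorem~\ref{thm:homeo} is exactly the two-step assembly you describe: Lemma~\ref{lem:nsint} for injectivity after sufficiently many subdivisions, then Lemma~\ref{lem:homeom} to conclude the homeomorphism. Your additional remarks on uniform choice of the subdivision count and the compact-to-Hausdorff principle are sound elaborations of what the paper leaves implicit.
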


\begin{proof}
It follows from Lemma~\ref{lem:nsint} and Lemma~\ref{lem:homeom}. \hfill $\boxempty$
\end{proof}
\subsection{Ambient Isotopy}

We prove the ambient isotopy for open surfaces first and then closed surfaces. 

\begin{lemma}\label{smthap} \textup{\cite{brehm1982smooth}}
A compact polyhedral surface\footnote{The smooth approximations in the paper \cite{brehm1982smooth} satisfy not only the properties given here, but also some other curvature properties. The paper \cite{brehm1982smooth} considers polyhedral surfaces without boundary. However the properties given here remain hold for polyhedral surfaces with boundares.} $M$ can be approximated by a sequence of smooth surfaces $\{M_n\}_{n=1}^{\infty}$ such that 
\begin{enumerate}
\item Each $M_n$ is homeomorphic to $M$;
\item $M_n=M$ outside of the $\frac{1}{n}$ neighborhood of the $1$-skeleton of $M$;
\item $M_n \rightarrow M$ as $n \rightarrow \infty$ with respect to the Hausdorff Metric; 
\end{enumerate} 
\end{lemma}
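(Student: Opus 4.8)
The plan is to build each $M_n$ by mollifying $M$ in a controlled tube around its $1$-skeleton $\Sigma$, leaving the flat faces untouched. Since $M$ is piecewise linear it is already smooth (indeed flat) on the complement of $\Sigma$, so all of the work is concentrated in an arbitrarily thin neighborhood of $\Sigma$. First I would fix a finite cover of $\Sigma$ by two kinds of pieces: a tube $T_e$ around the relative interior of each edge $e$, and a small ball $B_v$ around each vertex $v$, arranged so that the $T_e$ overlap only inside the balls $B_v$. Taking the radius of every piece to be a fixed fraction of $1/n$ guarantees, by construction, that the modified surface agrees with $M$ outside the $1/n$-neighborhood of $\Sigma$, which is exactly property~2.

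Next I would smooth across each edge. After a rigid motion, $M$ near the interior of an edge is the graph of a piecewise-linear height function $z=f(x,y)$ with a single crease along $\{x=0\}$, say $f(x,y)=\alpha x$ for $x\ge 0$ and $f(x,y)=\beta x$ for $x\le 0$ with $\alpha\neq\beta$. Convolving in the transverse variable with a scaled mollifier $\rho_{1/n}$ supported in $[-1/n,1/n]$ replaces $f$ by the smooth function $f_{1/n}=f*\rho_{1/n}$, which equals $f$ for $|x|\ge 1/n$. The graph of $f_{1/n}$ is a smooth surface coinciding with $M$ off the edge-tube, and its displacement from $M$ is bounded by the Lipschitz constant of $f$ times $1/n$, hence is $O(1/n)$.

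The delicate part is the smoothing at the vertices together with the patching of the edge-smoothings into a single globally $C^\infty$ surface. Near a vertex the singular set is a cone over a polygonal link rather than a single straight crease, so no single transverse direction is available; here I would work in suitable local coordinates and use a smooth partition of unity $\{\varphi_e,\varphi_v\}$ subordinate to the cover, defining $M_n$ locally as the graph of the blended height $\sum_e \varphi_e f^{(e)}_{1/n} + \sum_v \varphi_v f^{(v)}_{1/n}$, where the vertex mollification $f^{(v)}_{1/n}$ is chosen to agree with the edge mollifications on the overlaps inside $B_v$. Because each summand is $C^0$-close to $f$ and the $\varphi$'s are fixed smooth functions, the blend is smooth and still $O(1/n)$-close to $M$; the same construction applied to boundary edges and boundary corners handles the case of surfaces with boundary.

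It remains to verify properties~1 and~3. Property~3 follows from the displacement bound: since $M_n=M$ outside the $1/n$-tube and lies within $O(1/n)$ of $M$ inside it, the Hausdorff distance between $M_n$ and $M$ tends to $0$. Property~1, that each $M_n$ is embedded and homeomorphic to $M$, is the step I expect to be the main obstacle, since mollification yields a smooth \emph{immersed} surface automatically but global injectivity must be argued separately. For this I would invoke compactness of $M$: away from $\Sigma$ the surface has a positive minimal separation distance, while locally every smoothing is graphical and therefore embedded, so for all sufficiently large $n$ the perturbation is too small to create new self-intersections. The resulting near-identity correspondence $M\to M_n$ is then a homeomorphism, preserving the topological type and completing the argument.
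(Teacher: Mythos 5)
The paper does not prove this statement at all: it is quoted verbatim from Brehm and K\"uhnel \cite{brehm1982smooth} and used as a black box (only the footnoted extension to surfaces with boundary is asserted without argument). So there is no in-paper proof to compare against, and your proposal should be judged as a reconstruction of the cited result.

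As such a reconstruction, the edge part is fine --- transverse mollification of a single straight crease is standard, gives agreement outside the $1/n$-tube, and yields the $O(1/n)$ displacement bound that delivers properties 2 and 3. The genuine gap is exactly where you flag ``the delicate part'': the vertex smoothing is asserted, not constructed. Two concrete problems. First, you implicitly assume that near a vertex $M$ is the graph of a height function over a plane; a polyhedral cone in $\mathbb{R}^3$ (e.g.\ a saddle vertex whose link has total angle exceeding $2\pi$) need not project injectively to any plane, so ``the blended height'' is not well defined in general, and one must instead work with a graph over the cone itself (or over its link cross the radial direction), which changes the analysis. Second, the partition of unity does not actually do any work at the vertex: each edge mollification $f^{(e)}_{1/n}$ is still creased along the \emph{other} edges entering $B_v$, so $\varphi_e f^{(e)}_{1/n}$ is not smooth inside $B_v$ unless $\varphi_e$ vanishes there, in which case the whole burden falls on $f^{(v)}_{1/n}$ --- the object you postulate to ``agree with the edge mollifications on the overlaps'' but never construct. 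Producing a smooth cap over a polyhedral cone that matches prescribed edge-mollified collars to infinite order on the overlaps is the actual content of the Brehm--K\"uhnel construction (they do it by an explicit radial interpolation that also controls curvature), and without it the argument does not close. Your treatment of property 1 (minimal separation off $\Sigma$ plus local graphicality forces embeddedness for large $n$) is sound once the construction exists, and the boundary case is a routine adaptation as you say.
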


\begin{cor}\label{coro:cpsm}
A compact polyhedral surface $M$ can be approximated by a sequence of smooth surfaces $\{M_n\}_{n=1}^{\infty}$ such that each $M_n$ is ambient isotopic to $M$. 
\end{cor}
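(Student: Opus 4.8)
The plan is to promote the homeomorphism of Lemma~\ref{smthap} to an ambient isotopy by applying the isotopy extension theorem to a suitable isotopy of $M$ onto $M_n$. Write $g,g_n:K\to\mathbb{R}^3$ for parametrizations of $M$ and of the smooth approximation $M_n$ over the common underlying surface $K$, chosen so that $g_n=g$ wherever $M_n=M$. By property~2 of Lemma~\ref{smthap} this means $g_n$ agrees with $g$ outside the preimage of the $\frac1n$-neighborhood $N_{1/n}(\Sigma)$ of the $1$-skeleton $\Sigma$, so the two embeddings differ only inside a thin tube running along the edges and vertices of $M$.

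First I would introduce the straight-line homotopy $h_t=(1-t)\,g+t\,g_n$ for $t\in[0,1]$, which satisfies $h_0=g$, $h_1=g_n$, and $h_t\equiv g$ outside $N_{1/n}(\Sigma)$. If each $h_t$ is an embedding, then $\{h_t\}$ is an isotopy of the compact surface $M$ inside $\mathbb{R}^3$ that is stationary outside a bounded tube, and the isotopy extension theorem furnishes a compactly supported ambient isotopy $F_t:\mathbb{R}^3\to\mathbb{R}^3$ with $F_0=\mathrm{id}$ and $F_1(M)=M_n$. This is exactly an ambient isotopy carrying $M$ to $M_n$; since property~3 is inherited by any tail $\{M_n\}_{n\ge N}$, the reindexed tail is the desired sequence of smooth surfaces each ambient isotopic to $M$.

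The crux is therefore to show that $h_t$ is injective for every $t$, and I expect this to be the main obstacle. I would handle it in two regimes. Globally, because $M$ is a compact embedded polyhedral surface it has a positive minimal separation distance $\rho$ (in the sense used in Proposition~\ref{prop:aste}) between points that are far apart along $M$; each $h_t(x)$ lies within $O(\tfrac1n)$ of $\Sigma$ whenever it differs from $g(x)$, so once $\tfrac1n$ is small relative to $\rho$ the perturbed portion cannot meet any distant sheet of $M$. Locally, near an edge the homotopy interpolates between the folded faces of $M$ and the rounded profile of $M_n$ within a single wedge, and near a vertex between the corner of $M$ and its smoothed cap; in each case the Brehm--Kuhnel smoothing is graph-like over the adjacent faces, so the interpolants remain graphs and hence stay embedded as $t$ varies. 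Combining the global separation estimate with the local graph structure shows $h_t$ is an embedding for all $t$ once $n$ is large, which completes the argument.
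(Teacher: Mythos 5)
Your proposal is correct and follows essentially the same route as the paper: both treat the passage from $M$ to $M_n$ as a small deformation supported in the $\frac{1}{n}$-neighborhood of the $1$-skeleton that creates no self-intersections, and then promote that isotopy to an ambient one. The paper compresses this into a single sentence (leaving the choice of connecting deformation, the embeddedness of the intermediate surfaces, and the isotopy extension step implicit), whereas you instantiate the deformation as the straight-line homotopy and supply the global-separation and local-graph estimates --- an elaboration of the same argument rather than a different method.
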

\begin{proof}
The smooth surface $M_n$ is obtained by a smoothing in the paper \cite{brehm1982smooth}. Note that as long as the smoothing is within a small scope such that the process, a continuous deformation, does not yield intersections, ambient isotopy is preserved. The smoothing is performed within the $\frac{1}{n}$ neighborhood of the $1$-skeleton of $M$ yielding no intersection \cite{brehm1982smooth}, so the homeomorphism in Lemma~\ref{smthap} can be extended to an ambient isotopy.  \hfill $\boxempty$
\end{proof}

\begin{theorem}
The open control surface $\mathbf{l}$ will eventually be ambient isotopic to an open B\'ezier surface $\mathbf{b}$ via subdivision.
\end{theorem}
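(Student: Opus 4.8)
The plan is to produce the ambient isotopy as a fiberwise deformation along the normal lines of the smooth surface $\mathbf{b}$, after arranging that the control surface sits inside a tubular neighborhood of $\mathbf{b}$ as a normal graph. To keep the analysis on smooth objects, I would first apply Corollary~\ref{coro:cpsm} to replace the polyhedral surface $\mathbf{l}$ by a smooth surface $\tilde{\mathbf{l}}$ that is ambient isotopic to $\mathbf{l}$ and, by construction, differs from $\mathbf{l}$ only inside the $\frac{1}{n}$ neighborhood of its $1$-skeleton, so that $\tilde{\mathbf{l}}$ is as Hausdorff-close to $\mathbf{l}$ as desired. Since ambient isotopy is an equivalence relation, it then suffices to show that $\tilde{\mathbf{l}}$ is ambient isotopic to $\mathbf{b}$, with both surfaces now smooth.

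Because $\mathbf{b}$ is compact and smooth, it admits a tubular neighborhood $N_{\epsilon}$ of some radius $\epsilon>0$ carrying a continuous normal projection $\pi:N_{\epsilon}\to\mathbf{b}$; for the open surface I would take a normal neighborhood that collars the boundary $\partial\mathbf{b}$ so that $\pi$ remains well behaved up to the edge. By the Hausdorff convergence established in the preliminaries, after sufficiently many subdivisions $\mathbf{l}$, and hence $\tilde{\mathbf{l}}$, lies entirely inside $N_{\epsilon}$.

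The central step is to show that $\pi$ restricted to $\tilde{\mathbf{l}}$ is a homeomorphism onto $\mathbf{b}$, i.e. that $\tilde{\mathbf{l}}$ meets each normal fiber $\pi^{-1}(p)$ in exactly one point. Here I would use Lemma~\ref{lem:normal}: the normals of $\mathbf{l}$ converge to those of $\mathbf{b}$, and the smoothing of Corollary~\ref{coro:cpsm} keeps the normals of $\tilde{\mathbf{l}}$ close to those of $\mathbf{l}$, so the tangent planes of $\tilde{\mathbf{l}}$ are nearly parallel to those of $\mathbf{b}$. This near-parallelism gives transversality to the normal fibers, making $\pi|_{\tilde{\mathbf{l}}}$ a local homeomorphism; combined with Theorem~\ref{thm:homeo} and the non-self-intersection of Lemma~\ref{lem:nsint}, together with matching behavior on the boundary, it upgrades to a global homeomorphism. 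This is the step I expect to be the main obstacle, since verifying that every fiber is hit exactly once (surjectivity and injectivity of $\pi|_{\tilde{\mathbf{l}}}$) requires controlling the open surface near $\partial\mathbf{b}$, where the tubular neighborhood degenerates.

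With the normal-graph property in hand, I would finish by an explicit construction. Writing each $q\in N_{\epsilon}$ as $q=\pi(q)+s\,\mathbf{n}(\pi(q))$ with $|s|<\epsilon$, and letting $h(\pi(q))$ denote the signed height at which $\tilde{\mathbf{l}}$ meets the fiber over $\pi(q)$, I would define $H(q,t)=\pi(q)+\phi_{t}(s)\,\mathbf{n}(\pi(q))$, where $\phi_{t}$ slides the value $h(\pi(q))$ to $0$ linearly in $t$ while fixing $s=\pm\epsilon$ through a cutoff, and where $H$ is the identity outside $N_{\epsilon}$. Then $H(\cdot,0)=\mathrm{id}$, the time-one map $H(\cdot,1)$ carries $\tilde{\mathbf{l}}$ onto $\mathbf{b}$, and each $H(\cdot,t)$ is a homeomorphism of $\mathbb{R}^{3}$, so $H$ is an ambient isotopy between $\tilde{\mathbf{l}}$ and $\mathbf{b}$. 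Composing with the ambient isotopy from Corollary~\ref{coro:cpsm} then yields the desired ambient isotopy between $\mathbf{l}$ and $\mathbf{b}$.
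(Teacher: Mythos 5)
Your strategy is genuinely different from the paper's, and it contains a gap that you flag but do not close. The paper's proof of this theorem never uses the geometric proximity of $\mathbf{l}$ to $\mathbf{b}$ at all: after smoothing $\mathbf{l}$ via Corollary~\ref{coro:cpsm}, it observes that an open surface in this setting is an embedded disk, invokes the uniqueness of disk embeddings in $\mathbb{R}^3$ up to ambient isotopy (citing Hirsch), and concludes by transitivity --- no tubular neighborhood, no normal projection. Your route, realizing $\tilde{\mathbf{l}}$ as a normal graph over $\mathbf{b}$ and pushing along the fibers, is essentially the mechanism the paper reserves for the \emph{closed} case (via Chazal and Cohen-Steiner), and it would yield a more quantitative and constructive isotopy; but it is precisely in the open case that it runs into trouble at the boundary.

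Concretely: the boundary of the control surface consists of the control polygons of the four boundary B\'ezier curves, so $\partial\tilde{\mathbf{l}}$ does not lie in the normal fibers over $\partial\mathbf{b}$ --- it only Hausdorff-approximates $\partial\mathbf{b}$. Consequently $\pi|_{\tilde{\mathbf{l}}}$ cannot be a homeomorphism onto $\mathbf{b}$ as you assert: fibers over points of $\mathbf{b}$ sufficiently close to $\partial\mathbf{b}$ may miss $\tilde{\mathbf{l}}$ entirely, in which case the height function $h$ is undefined there and your map $H$ is not defined on all of $N_{\epsilon}$; alternatively $\tilde{\mathbf{l}}$ may overhang the collar, in which case $H(\cdot,1)$ carries $\tilde{\mathbf{l}}$ onto a region whose boundary differs from $\partial\mathbf{b}$. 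You acknowledge this as ``the main obstacle,'' but the proposal supplies no mechanism to resolve it: one would need either to first trim or extend $\tilde{\mathbf{l}}$ so that its boundary lies in $\pi^{-1}(\partial\mathbf{b})$, or to push onto a slightly shrunken copy of $\mathbf{b}$ and then dispose of the remaining annular strip by a separate collar isotopy. Since the open surface is just an embedded disk, the paper's purely topological argument is both shorter and avoids this boundary analysis entirely; as written, your central step is asserted rather than proved.
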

\begin{proof}
By Corollary~\ref{coro:cpsm}, let $\mathbf{f}(u,v)$ be ambient isotopic smooth approximation of $\mathbf{l}(u,v)$. Note that for $\forall \epsilon>0$, there exists an embedding $\mathbf{g}(u,v)$ for $u,v \in [0,1]$ such that $|\mathbf{f}(u,v)-\mathbf{g}(u,v)|<\epsilon$ (by an approximation theorem \cite[p.26]{Hirsch}),  and $\mathbf{f}$ and $\mathbf{g}$ are homotopic (by \cite[Lemma 1.5]{Hirsch}). Since all surfaces between $\mathbf{f}$ and $\mathbf{g}$ determined by the homotopy are homeomorphic to the unit square, they are homeomorphic. So the homotopy is actually an ambient isotopy.  There is only one way (up to ambient isotopy) to embed a disk \cite[Theorem 3.1]{Hirsch}, so $\mathbf{b}$ and $\mathbf{g}$ are ambient isotopic. By the equivalence relation of ambient isotopy, we have $\mathbf{b}$ and $\mathbf{l}$ are ambient isotopic. \hfill $\boxempty$
\end{proof}

\begin{theorem}
The closed control surface $\mathbf{l}$ will eventually be ambient isotopic to a closed B\'ezier surface $\mathbf{b}$ via subdivision.
\end{theorem}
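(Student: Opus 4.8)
The plan is to reduce the closed case to a comparison of two $C^1$-close smooth embedded surfaces and then to invoke a tubular neighborhood together with the isotopy extension theorem. The argument of the open case cannot be repeated verbatim: there the conclusion rested on the uniqueness, up to ambient isotopy, of an embedded disk, but a closed surface such as a torus admits inequivalent embeddings in $\mathbb{R}^3$ (knotted versus unknotted), so homeomorphism alone is insufficient. Instead I would exploit the fact that, via subdivision, $\mathbf{l}$ approximates $\mathbf{b}$ not only in Hausdorff distance but also in its tangent data, by Lemma~\ref{lem:normal}.

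First I would fix a smooth surface $\mathbf{f}$ that is ambient isotopic to $\mathbf{l}$, supplied by Corollary~\ref{coro:cpsm}. Since the Brehm--Kuhnel smoothing is supported in a $\frac{1}{n}$ neighborhood of the $1$-skeleton and may be arranged so that the normal of $\mathbf{f}$ at each point lies in the cone spanned by the normals of the nearby faces of $\mathbf{l}$, after sufficiently many subdivisions (where the face normals converge to that of $\mathbf{b}$ by Lemma~\ref{lem:normal} and the dihedral angles tend to $\pi$) the surface $\mathbf{f}$ can be taken $C^1$-close to $\mathbf{b}$. By transitivity of ambient isotopy it then suffices to prove that $\mathbf{f}$ and $\mathbf{b}$ are ambient isotopic.

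Next, since $\mathbf{b}$ is a smooth compact embedded surface, it possesses a tubular neighborhood $N_r$ of some radius $r>0$ carrying a smooth nearest-point projection $\pi\colon N_r\to\mathbf{b}$ whose fibers are the normal segments of $\mathbf{b}$. After enough subdivisions the Hausdorff convergence forces $\mathbf{f}\subset N_r$, and the $C^1$-closeness forces $\mathbf{f}$ to meet each normal fiber transversally. I would then show that $\pi|_{\mathbf{f}}\colon\mathbf{f}\to\mathbf{b}$ is a diffeomorphism, so that $\mathbf{f}$ is the graph of a function $h\colon\mathbf{b}\to(-r,r)$ in the tubular coordinates; transversality makes $\pi|_{\mathbf{f}}$ a local diffeomorphism, and the closeness makes each fiber meet $\mathbf{f}$ exactly once, giving a bijection of compact surfaces. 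The fiberwise homotopy $H(p,t)$ whose graph is $(1-t)h$ then moves $\mathbf{f}$ onto $\mathbf{b}$ through embedded graphs, hence is an isotopy. Finally the isotopy extension theorem \cite[Ch.~8]{Hirsch} promotes $H$ to an ambient isotopy of $\mathbb{R}^3$, so $\mathbf{f}$ and $\mathbf{b}$ are ambient isotopic, and by transitivity so are $\mathbf{l}$ and $\mathbf{b}$.

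The main obstacle is to establish that $\pi|_{\mathbf{f}}$ is globally injective, equivalently that $\mathbf{f}$ is a graph over $\mathbf{b}$ in the tubular coordinates; once this holds, each level of $H$ is automatically an embedded graph and the isotopy is immediate. Global injectivity requires a single threshold on the number of subdivisions that simultaneously forces the $C^0$ (Hausdorff) discrepancy below $r$ and the $C^1$ (normal) discrepancy below the angle determined by $r$, so that no normal fiber of $\mathbf{b}$ can meet $\mathbf{f}$ twice. Producing such a uniform threshold by combining the Hausdorff estimate of the preliminaries with Lemma~\ref{lem:normal} is the only delicate point; the remaining steps are a routine application of the tubular-neighborhood and isotopy extension theorems.
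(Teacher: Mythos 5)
Your proof is essentially correct, but it takes a genuinely different route from the paper. The paper handles the closed case by quoting the criterion of Chazal and Cohen--Steiner \cite{Chazal2005}: if $S'$ is homeomorphic to $S$, lies inside a tubular neighborhood $\mathbf{T}$ of $S$, and $\bar{\mathbf{T}}\setminus S'$ is disconnected, then $S'$ is ambient isotopic to $S$; it then asserts (without detailed verification) that subdivision eventually fulfills these hypotheses with $S=\mathbf{b}$ and $S'=\mathbf{l}$, so no intermediate smoothing is needed. You instead pass through the smooth approximant $\mathbf{f}$ of Corollary~\ref{coro:cpsm}, show $\mathbf{f}$ is a graph over $\mathbf{b}$ in tubular coordinates via the nearest-point projection, contract the graph fiberwise, and apply the isotopy extension theorem \cite{Hirsch}. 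The trade-off is clear: your argument is constructive and makes explicit the quantitative threshold that the paper leaves implicit, but it requires uniform $C^1$ control --- note that Lemma~\ref{lem:normal} as stated only gives normal convergence \emph{at vertices}, so you are implicitly also using flattening of dihedral angles to promote this to a uniform estimate over each face --- whereas the separation condition $\bar{\mathbf{T}}\setminus S'$ disconnected in the paper's criterion is purely topological and weaker than the graph property (a surface can separate the tube while failing to be transverse to every normal fiber). Both proofs ultimately rest on the same unproved quantitative input (Hausdorff plus normal convergence strong enough to prevent $\mathbf{l}$, or $\mathbf{f}$, from doubling back inside the tube), which you at least identify explicitly as the delicate point; your version is therefore more self-contained, while the paper's is shorter because the hard step is outsourced to \cite{Chazal2005}.
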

\begin{proof}
When $\mathbf{b}(u,v)$ is closed, we use the following theorem: Suppose $S$ and $S'$ are compact orientable surfaces embedded in $\mathbb{R}^3$, and $\mathbf{T}$ is a tubular neighborhood\footnote{A topological thickening used in \cite{Chazal2005} is equivalent to the closure of a tubular neighborhood defined by \cite{DoCarmo1976}.} of $S$. Chazal and Cohen-Steiner \cite{Chazal2005} proved that if $S'$ is homeomorphic to $S$, $S' \subset \mathbf{T}$, and $\mathbf{\bar{T}} \setminus S'$ is disconnected, where $\mathbf{\bar{T}}$ is the closure of $\mathbf{T}$, then $S'$ is ambient isotopic\footnote{In the case of \cite{Chazal2005}, isotopy and ambient isotopy are equivalent.} to $S$. These conditions of the theorem can be fulfilled for $\mathbf{b}$ and $\mathbf{l}$ by sufficiently many subdivisions. \hfill $\boxempty$
\end{proof}
\section{Convergence regarding Curvature}\label{sec:crc}
The total Gaussian curvature $K(p)$ of a vertex $p$ on a polyhedral surface $\Omega \in \mathbb{R}^3$ is defined as 
$$K(p)=2\pi - \sum_{i=1} \theta_i(p),$$
where $\theta_i(p)$ is the interior angle of face $f_i$ at $p$. Let $\chi (\Omega)$ be the Euler characteristic of $\Omega$. It was shown \cite{reshetnyak1993geometry} the following discrete Gauss-Bonet theorem: 
\begin{enumerate}
\item If $\Omega$ is closed, then  \begin{equation}\label{discGBcl} \sum_{p \in \Omega} K(p) =2\pi \chi (\Omega).\end{equation}
\item If $\Omega$ is open, then \begin{equation}\label{discGB}\sum_{p \in \mathring{\Omega}} K(p) + \sum_{p\in \partial \Omega} \alpha(p)=2\pi \chi (\Omega),\end{equation}
where $\mathring{\Omega}$ is the interior of $\Omega$, and $\alpha(p)$ is the exterior angle at a vertex $p$ of the boundary $\partial \Omega$ of $\Omega$.
\end{enumerate}

For open B\'ezier surfaces, we first consider the convergence regarding total curvature of the boundaries. The total curvature of a p.l. curve is defined as the sum of exterior angles \cite{Milnor1950}. The Gauss-Bonet theorem for smooth surface $\Omega$ with a curvilinear boundary is \cite{DoCarmo1976},
\begin{equation}\label{GBsm}\int_{\mathring{\Omega}} K dA + \int_{\partial \Omega} \kappa_g ds + T_{\kappa}=2\pi \chi (\Omega),\end{equation}
where $\kappa_g$ is the geodesic curvature at a smooth point and $T_{\kappa}$ is the total curvature at vertices. 

\begin{lemma}\label{lem:lencon}
Let $\ell(t)$ be the control polygon of a B\'ezier curve $\gamma(t)$ where $t\in[0,1]$, which is uniformly parametrized. Then $\int_0^1 |\ell^m(t)| dt$ converges to the length of the curve $\ell^{m-1}(t)$, via subdivision, where $m \geq 1$ is the order of discrete derivatives. 
\end{lemma}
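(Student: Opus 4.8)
The plan is to reduce the convergence of the higher-order discrete-derivative integrals to the already-established exponential convergence of control polygons (and their discrete derivatives) to the underlying B\'ezier curve and its smooth derivatives. Recall that $\ell^{m}(t)$ denotes the $m$-th discrete derivative of the uniformly parametrized control polygon, and that $\ell^{m-1}(t)$ is a p.l.\ curve. First I would set up the appropriate notation: the $m$-th discrete derivative is a piecewise-linear vector-valued function whose vertices are the $m$-th finite differences of the control points (suitably scaled), and the integrand $|\ell^{m}(t)|$ is the speed of the curve $\ell^{m-1}(t)$ parametrized by $t$. Hence $\int_0^1 |\ell^{m}(t)|\,dt$ is exactly the arc length of $\ell^{m-1}(t)$, so the statement is really a claim that the arc length of the p.l.\ curve $\ell^{m-1}(t)$ converges under subdivision, which is what I will prove.

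Second, I would invoke the result of Morin and Goldman \cite{Morin_Goldman2001}, cited just before Lemma~\ref{lem:normal}, that the discrete derivatives of the control structure converge to the corresponding derivatives of the B\'ezier object via subdivision; combined with the exponential convergence of control polygons established in the Hausdorff-distance subsection, this gives that $\ell^{m-1}(t)$ converges to the smooth curve $\gamma^{(m-1)}(t)$ (the $(m-1)$-st derivative of $\gamma$) uniformly, and moreover that the vertices of $\ell^{m}(t)$ converge to the values of $\gamma^{(m)}(t)$. The key analytic point is then that arc length is not merely continuous but is well controlled under subdivision of a smooth curve: for a control polygon of a smooth curve, the polygonal arc length converges to the smooth arc length because the polygon converges in a $C^1$-type sense. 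I would make this precise by writing
$$\int_0^1 |\ell^{m}(t)|\,dt \;=\; \sum_{k} \bigl| \ell^{m-1}(t_{k+1}) - \ell^{m-1}(t_k) \bigr|,$$
the sum of the lengths of the edges of $\ell^{m-1}$, and comparing this Riemann-type sum against $\int_0^1 |\gamma^{(m)}(t)|\,dt$, the length of $\gamma^{(m-1)}(t)$.

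Third, to control the error I would split it into two pieces: the discrepancy between the polygonal edge lengths of $\ell^{m-1}$ and the chord lengths of $\gamma^{(m-1)}$ over the same parameter partition (controlled by the uniform convergence of vertices, which is exponential), and the discrepancy between those chord lengths and the true arc length of the smooth curve $\gamma^{(m-1)}$ (controlled because a smooth curve's inscribed polygons converge in length to the curve as the mesh refines). Both pieces tend to $0$ as the number of subdivisions grows, so $\int_0^1 |\ell^{m}(t)|\,dt$ converges to the arc length of $\gamma^{(m-1)}(t)$, which is the length of $\ell^{m-1}(t)$ in the limit.

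I expect the main obstacle to be the second error term: showing that the inscribed-polygon length genuinely converges to the smooth arc length rather than merely staying bounded. This requires that subdivision refines the parameter partition with mesh size going to $0$ and that $\gamma^{(m-1)}$ is $C^1$ (equivalently $\gamma$ is $C^{m}$), so that the chord-to-arc ratio tends to $1$ uniformly; I would need to confirm the regularity hypotheses on $\mathbf{b}$ guarantee this smoothness of the relevant derivative. A secondary subtlety is ensuring the discrete derivative $\ell^{m}$ is scaled consistently so that its integral equals the length of $\ell^{m-1}$ exactly; this is a bookkeeping point in the definition of discrete derivatives from \cite{Morin_Goldman2001}, but it must be pinned down for the identity $\int_0^1 |\ell^{m}|\,dt = \mathrm{length}(\ell^{m-1})$ to hold.
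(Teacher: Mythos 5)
Your opening identity is the problem. You assert that $\int_0^1 |\ell^{m}(t)|\,dt$ is \emph{exactly} the arc length of $\ell^{m-1}(t)$ because $|\ell^{m}|$ is the speed of $\ell^{m-1}$. In the Morin--Goldman framework the paper adopts, $\ell^{m}$ is not the pointwise derivative of $\ell^{m-1}$: it is again a piecewise linear curve, namely the uniform p.l.\ interpolant of the (scaled) $m$-th forward differences of the control points, whereas the true derivative of the p.l.\ curve $\ell^{m-1}$ is piecewise \emph{constant}. On each parameter subinterval $\ell^{m}$ interpolates linearly between two consecutive difference vectors, so $\int|\ell^{m}|$ over that subinterval is not the corresponding edge length of $\ell^{m-1}$, and the displayed equation $\int_0^1|\ell^{m}(t)|\,dt=\sum_k|\ell^{m-1}(t_{k+1})-\ell^{m-1}(t_k)|$ fails in general. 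This is not the deferred ``bookkeeping point'' you mention at the end; it is the entire content of the lemma. If the identity held, the statement would be a tautology valid at every subdivision level and there would be nothing to converge.

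The repair is the route the paper actually takes, and which your third paragraph half-contains: show that \emph{both} quantities converge to the same limit, namely the length of $\gamma^{(m-1)}$. First, $\ell^{m}\to\gamma^{(m)}$ uniformly under subdivision \cite{Morin_Goldman2001}, hence $\int_0^1|\ell^{m}(t)|\,dt\to\int_0^1|\gamma^{(m)}(t)|\,dt$, which equals the length of $\gamma^{(m-1)}$. Second, the length of the p.l.\ curve $\ell^{m-1}$ converges to the length of $\gamma^{(m-1)}$ (the paper cites \cite{paeth1995graphics} for this; your chord-versus-arc estimate in the third paragraph is essentially a proof of it, and your concern about regularity is harmless since $\gamma$ is polynomial). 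Subtracting the two limits gives the lemma.
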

\begin{proof}
Since $\ell^m(t) \rightarrow \gamma^m(t)$ \cite{Morin_Goldman2001}, we have $\int_0^1 |\ell^m(t)| dt$ converges to $\int_0^1 |\gamma^m(t)| dt$, the length of $\gamma^{m-1}(t)$. Also, the length of $\ell^{m-1}(t)$ converges to the length of $\gamma^{m-1}(t)$ \cite{paeth1995graphics}. It follows that  $\int_0^1 |\ell^m(t)| dt$ converges to the length of the curve $\ell^{m-1}(t)$. \hfill $\boxempty$
\end{proof}

\begin{lemma}\label{lem:taco}
The total curvature of the control polygon converges to the total curvature of a B\'ezier curve. 
\end{lemma}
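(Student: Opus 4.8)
The plan is to reduce the statement about turning angles to a statement about the lengths of spherical curves, and then to invoke the convergence of discrete derivatives together with a length-convergence argument in the spirit of Lemma~\ref{lem:lencon}. Recall that, by Milnor's definition \cite{Milnor1950}, the total curvature of the p.l. control polygon is the sum of its exterior angles, while the total curvature of the smooth B\'ezier curve $\gamma$ equals the length of its tangent indicatrix $T(t)=\gamma'(t)/|\gamma'(t)|$, the spherical curve traced by the unit tangent on $S^2$; here the regularity assumption guarantees $\gamma'(t)\neq 0$, so $T$ is a well-defined smooth curve. The first step is to record the analogous description on the p.l. side: if $u_0,u_1,\ldots$ denote the unit edge vectors of the control polygon, then each exterior angle is exactly the geodesic distance $d_{S^2}(u_{i-1},u_i)$ between consecutive unit edges, so the total curvature of the control polygon is the length of the spherical geodesic polygon through the points $u_i$. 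Thus both total curvatures are lengths of spherical curves, and it suffices to show that this spherical polygon converges to $T$ in a way that forces convergence of lengths.

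Second, I would identify the vertices $u_i$ as the normalized first discrete derivatives of $\mathbf{l}$. By Morin--Goldman \cite{Morin_Goldman2001} (as already used for Lemma~\ref{lem:normal}), the discrete derivatives $\ell^1$ converge uniformly to $\gamma'$ via subdivision; since $\gamma$ is regular, the normalization $v\mapsto v/|v|$ is smooth near the range of $\gamma'$, so each $u_i$ converges to $T(t_i)$, where $t_i$ is the parameter value attached to the $i$-th edge. Because subdivision drives the mesh $1/N\to 0$ and $T$ is uniformly continuous, the samples $T(t_i)$ lie on $T$ with vanishing spacing, and the $u_i$ differ from these samples by a quantity tending to $0$. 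Hence the spherical geodesic polygon through the $u_i$ is, up to an arbitrarily small perturbation of its vertices, an inscribed polygon of $T$ with mesh tending to $0$.

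Third, I would conclude by the standard fact that inscribed polygons of a smooth (hence $C^1$, rectifiable) curve with mesh tending to $0$ have lengths converging to the length of the curve; the perturbation of the vertices off $T$ changes the polygon length by an amount that also tends to $0$, again by uniform continuity of $T$. Therefore the length of the spherical polygon, i.e.\ the total curvature of the control polygon, converges to the length of $T$, i.e.\ the total curvature of $\gamma$. This is precisely the spherical analogue of Lemma~\ref{lem:lencon}, with the Euclidean derivative curve replaced by the tangent indicatrix.

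I expect the main obstacle to be the length-convergence step, since length is not continuous under mere pointwise convergence of curves (a staircase can approximate a diagonal while keeping a fixed length). The argument must therefore use more than convergence of the vertices $u_i$: it relies crucially on regularity ($\gamma'\neq 0$), so that $T$ is a genuine $C^1$ spherical curve and the turning of the polygon is spread over many vertices each turning by a small angle, and it uses that the $u_i$ approximate an honest inscribed sampling of $T$ with mesh $\to 0$. Making precise the passage from ``almost inscribed'' to ``inscribed'' polygons, and controlling the accumulated vertex perturbation over the $O(N)$ vertices uniformly, is where the care is needed.
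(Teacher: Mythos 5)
Your argument is correct in outline but takes a genuinely different route from the paper's after the shared first step. Both proofs begin with Milnor's identification \cite{Milnor1950} of each exterior angle with the spherical geodesic distance between consecutive unit edge vectors, so that $\sum_i \alpha_i$ is the length of a spherical polygon through the normalized discrete derivatives $u_i$. From there you go directly to the tangent indicatrix $T=\gamma'/|\gamma'|$: the $u_i$ are (up to a vanishing perturbation) samples of $T$ with mesh tending to zero, and lengths of inscribed polygons of a $C^1$ curve converge to the length of the curve. The paper instead stays on the discrete side: it observes that, since $|\ell'(t)|\to|\gamma'(t)|=1$, the length of the polygon $\ell'(t)$ converges to the length of the spherical polygon $u(t)$; it then invokes Lemma~\ref{lem:lencon} to replace the length of $\ell'(t)$ by $\int_0^1|\ell''(t)|\,dt$, and finally uses the Morin--Goldman convergence $\ell''\to\gamma''$ \cite{Morin_Goldman2001} to reach $\int_0^1|\gamma''(t)|\,dt$ (with $\gamma$ arc-length parametrized). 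Your route avoids second discrete derivatives and Lemma~\ref{lem:lencon} entirely and is arguably the more transparent geometric picture; the paper's route outsources the delicate length-convergence step to Lemma~\ref{lem:lencon} and the cited length-convergence results for control polygons.

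The step you explicitly leave open is a real obligation, not a formality: moving each of the $O(N)$ vertices by $\epsilon_N$ can change the polygon length by $O(N\epsilon_N)$, so pointwise convergence of the $u_i$ to points of $T$ does not by itself yield convergence of lengths. It can be closed, but only by invoking a rate: under de Casteljau subdivision the control polygon of the hodograph (hence the discrete derivative $\ell'$) converges quadratically, at rate $O(4^{-k})$ after $k$ subdivisions \cite{Nairn-Peters-Lutterkort1999}, while the number of vertices grows like $O(2^k)$, so the accumulated perturbation is $O(2^{-k})\to 0$. You should say this. It is worth noting that the paper's own proof faces the identical issue in the step ``since $|\ell'(t)|\to 1$, the length of $\ell'(t)$ converges to the length of $u(t)$'' and treats it just as lightly, so your proposal is no less rigorous than the published argument on this point.
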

\begin{proof}
Denote a B\'ezier curve as $\gamma(t)$ and the control polygon as $\ell(t)$, where $t\in[0,1]$. Suppose without loss of generality that $\gamma(t)$ is parametrized by arc length. Let $\alpha_i$ be an exterior angle of $\ell(t)$ and $\sum \alpha_i$ be the total curvature of $\ell(t)$. Then we need to show that, via subdivision,
$$\sum \alpha_i \rightarrow \int_0^1 |\gamma''(t)| dt. $$
Since $\ell''(t) \rightarrow \gamma''(t)$ \cite{Morin_Goldman2001}, it suffices to show that 
$$\sum \alpha_i \rightarrow  \int_0^1 |\ell''(t)| dt. $$
By Lemma~\ref{lem:lencon}, it suffices to show that
$$\sum \alpha_i \rightarrow \text{the length of}\ \ell'(t). $$
Let $u(t)$ be a p.l. curve determined by vertices $\{ \frac{\ell'(t)}{|\ell'(t)|}, \text{for}\ $t$\ \text{where}\ \ell'(t)\ \text{is a vertex}\}$. Since $|\ell'(t)| \rightarrow |\gamma'(t)|=1$, the length of $\ell'(t)$ converges to the length of $u(t)$. But $u(t)$ is inscribed \cite{Milnor1950} in a curve on the unit sphere whose length is $\sum \alpha_i$. So the result follows. \hfill $\boxempty$
\end{proof}

\begin{theorem}
For an open compact surface $\mathbf{b}$, the control surface $\mathbf{l}$ satisfies the following convergence, via subdivision:
$$\sum_{p \in \mathring{\mathbf{l}}} K(p) \rightarrow \int_{\mathring{\mathbf{b}}} K dA+ \int_{\partial\mathbf{b}} \kappa_g -\kappa ds$$
where $\kappa_g$ and $\kappa$ are the geodesic curvature and curvature at a smooth point of the boundary $\partial \mathbf{b}$. 
\end{theorem}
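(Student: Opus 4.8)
The plan is to play the discrete and smooth Gauss-Bonnet theorems against each other and reduce the statement to the behaviour of the boundary. Applying the discrete Gauss-Bonnet theorem \eqref{discGB} to the control surface gives $\sum_{p \in \mathring{\mathbf{l}}} K(p) = 2\pi \chi(\mathbf{l}) - \sum_{p\in \partial \mathbf{l}} \alpha(p)$, and applying the smooth Gauss-Bonnet theorem \eqref{GBsm} to $\mathbf{b}$ gives $\int_{\mathring{\mathbf{b}}} K\, dA = 2\pi \chi(\mathbf{b}) - \int_{\partial \mathbf{b}} \kappa_g\, ds - T_{\kappa}$. By Theorem~\ref{thm:homeo}, after sufficiently many subdivisions $\mathbf{l}$ and $\mathbf{b}$ are homeomorphic, so $\chi(\mathbf{l}) = \chi(\mathbf{b})$ (both equal $1$ in the open case). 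Substituting the smooth identity into the target right-hand side and cancelling the two $\kappa_g$ integrals, the asserted convergence becomes equivalent to
$$\sum_{p\in \partial \mathbf{l}} \alpha(p) \;\longrightarrow\; T_{\kappa} + \int_{\partial\mathbf{b}} \kappa\, ds.$$

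To handle the left-hand side, I would use the fact recorded in Section~\ref{sec:crc} that $\partial\mathbf{l}$ is exactly the union of the four control polygons of the boundary B\'ezier curves $\mathbf{b}(u,0), \mathbf{b}(u,1), \mathbf{b}(0,v), \mathbf{b}(1,v)$. I then split $\sum_{p\in \partial \mathbf{l}} \alpha(p)$ into the exterior angles at the vertices lying strictly inside one of the four side polygons and the exterior angles at the four corner vertices, that is, the images of the corners of the unit square, where two adjacent side polygons meet. For each side, the sum over its interior vertices is precisely the total curvature of that control polygon, which by Lemma~\ref{lem:taco} converges to the total curvature $\int |\gamma''|\,ds$ of the corresponding boundary curve; summing the four sides produces $\int_{\partial\mathbf{b}}\kappa\, ds$. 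For each corner, the exterior angle is governed by the incoming and outgoing edge directions, and by Lemma~\ref{lem:normal} these directions converge to the one-sided tangents of $\partial\mathbf{b}$ at the corner, so the four corner angles converge to the four smooth corner exterior angles whose sum is $T_\kappa$. Adding the two limits yields exactly $T_\kappa + \int_{\partial\mathbf{b}}\kappa\, ds$.

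The main obstacle is the clean separation of the corner contributions and the proof that the corner angles converge. A corner vertex of $\partial\mathbf{l}$ joins the last edge of one side polygon to the first edge of the next, so its exterior angle is not accounted for by Lemma~\ref{lem:taco}, which only controls vertices interior to a single control polygon; one must verify that these four leftover terms are precisely the polyhedral analogue of the corner total curvature $T_\kappa$ and that they converge to it. This is where Lemma~\ref{lem:normal} is indispensable, since the corner angle depends on the limiting tangent directions of two different boundary curves rather than on mere Hausdorff proximity. A secondary technical point is to fix the orientation and sign conventions so that the exterior-angle sum of the boundary polygon matches the unsigned total curvature $\int\kappa\,ds$ together with the corner turning, rather than a signed geodesic-type quantity. \hfill $\boxempty$
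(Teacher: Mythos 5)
Your proposal follows essentially the same route as the paper: equate the discrete and smooth Gauss--Bonnet identities via the homeomorphism of Theorem~\ref{thm:homeo}, reduce to the boundary, and use Lemma~\ref{lem:taco} for the four side control polygons together with first-derivative convergence for the four corner exterior angles. Your write-up is in fact somewhat more careful than the paper's, which states the intermediate identity as an exact equation without explicitly separating the corner terms from the side-polygon total curvatures as you do.
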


\begin{proof}
Consider the four conner points of $\mathbf{b}$ and $\mathbf{l}$. By the convergence of the first derivatives, the exterior angles at these conner points satisfies the convergence from $\mathbf{l}$ to $\mathbf{b}$. So the total curvature of conner points converges. 

Let $T_{\kappa}^i$ for $i=1,2,3,4$ denote the total curvature of four boundary control polygon of $\mathbf{l}$. It follows from Theorem~\ref{thm:homeo} and Equation~\ref{discGB} and~\ref{GBsm} that 
$$\sum_{p \in \mathring{\mathbf{l}}} K(p) + \sum_{i=1}^4 T_{\kappa}^i= \int_{\mathring{\mathbf{b}}} K dA+ \int_{\partial \mathbf{b}} k_g ds.$$

However, by Lemma~\ref{lem:taco}, 
$$\sum_{i=1}^4 T_{\kappa}^i \rightarrow \int_{\partial \mathbf{b}} k ds.$$
The conclusion follows.  \hfill $\boxempty$\\
\end{proof}

Note that for a closed B\'ezier surface, $\mathbf{b}(u,v)$ is not smooth at the points where common edges are connected, for which the Gaussian curvatures are not well-defined. The surface $\mathbf{b}(u,v)$ can be smoothed at the junction points according to Lemma~\ref{smthap}, satisfying the properties in Lemma~\ref{smthap}. We compare the total Gaussian curvature between the control surface $\mathbf{l}$ and the smooth approximation, denoted as $\tilde{\mathbf{b}}$. 

\begin{theorem}
For a closed B\'ezier surface $\mathbf{b}$, suppose that $\mathbf{l}$ is produced by sufficiently many subdivisions, then we have
$$\sum_{p \in \mathbf{l}} K(p) = \int_{\tilde{\mathbf{b}}} K dA,$$
where $K(p)$ is the total Gaussian curvature at $p \in \mathbf{l}$, $\tilde{\mathbf{b}}$ is a smooth approximation of $\mathbf{b}$, and $K$ is the Gaussian curvature of $\tilde{\mathbf{b}}$.
\end{theorem}
\begin{proof}
It follows from the discrete Gauss-Bonnet theorem given by Equation~\ref{discGBcl}, and the homeomorphism established by Theorem~\ref{thm:homeo}. \hfill $\boxempty$
\end{proof}

\section{Conclusion and Future Work}
We proved that the triangulated surface associated to an open or closed B\'ezier surface will be eventually ambient isotopic to the B\'ezier surface via subdivision. By the Gauss-Bonnet theorem, we showed that the triangulated surface converges to the smooth surface regarding total Gaussian curvature. This may contribute to the theoretical foundation of using B\'ezier surfaces in computer aided geometric design for geometric modeling. For practical potential, it may be worth investigating in the future how many subdivision iterations are needed to obtain the ambient isotopy. Besides, convergence regarding other curvature measures, such as total absolute Gaussian curvature, total mean curvature, and total absolute mean curvature may be of interesting as a future endeavor. 

\section*{Acknowledgments}
The author thanks Professor Thomas J. Peters for stimulating questions that motivate this work, and Anne Berres for her Matlab codes of B\'ezier surfaces.

\bibliographystyle{plain}
\bibliography{ji-tjp-biblio}

\end{document}